\numberwithin{equation}{section}
\theoremstyle{plain}
\newtheorem{theorem}{Theorem}[section]
\newtheorem{lemma}[theorem]{Lemma}
\newtheorem{proposition}[theorem]{Proposition}
\theoremstyle{definition}
\newtheorem{example}[theorem]{Example}
\newtheorem{?}[theorem]{Problem}
\newtheoremstyle{named}{}{}{\itshape}{}{\bfseries}{.}{.5em}{#1\thmnote{ #3}}
\theoremstyle{named}
\newcommand{\abs}[1]{\left|#1\right|}
\newcommand{\set}[1]{\left\{#1\right\}}
\newcommand{\f}[1]{\ifthenelse{\equal{#1}{1}}{(q;q)_\infty}{(q^{#1};q^{#1})_{\infty}}}
\def\bR{\mathbb{R}}
\def\bP{\mathbb{P}}
\def\PD{\mathrm{PD}}
\def\PDOset{\mathcal{PDO}}
\def\PDO{\mathrm{PDO}}
\def\ld{\ell_d}
\def\ldo{\ell_d^o}
\def\lr{\ell_r}
\def\cH{\mathcal{H}}
\begin{document}

\title[A PDO refinement]{A refined view of a curious identity for partitions into odd parts with designated summands}

\author[S. Fu]{Shishuo Fu$^{\ast}$}
\address[Shishuo Fu]{College of Mathematics and Statistics, Chongqing University \& Key Laboratory of Nonlinear Analysis and its Applications (Chongqing University), Ministry of Education, Chongqing 401331, P.R. China}
\email{fsshuo@cqu.edu.cn}
\thanks{$^{\ast}$Corresponding author.}

\author[J. A. Sellers]{James A. Sellers}
\address[James A. Sellers]{Department of Mathematics and Statistics, University of Minnesota Duluth, Duluth, MN 55812, USA}
\email{jsellers@d.umn.edu}

\date{\today}

\begin{abstract}

In 2002, Andrews, Lewis, and Lovejoy introduced the combinatorial objects which they called {\it partitions with designated summands}.  These are constructed by taking unrestricted integer partitions and designating exactly one of each occurrence of a part.  In the same work, they also considered the restricted partitions with designated summands wherein all parts must be odd, and they denoted the corresponding function by $\PDO(n)$.  

One of the most curious identities satisfied by $\PDO(n)$ is the following:  
 For all $n\geq 0$,
\begin{equation*}
\PDO(2n)=\sum_{k=0}^{n}\PDO(k)\PDO(n-k).
\end{equation*}
From a generating function perspective, this is equivalent to proving that 
\begin{equation*}
\sum_{n\geq 0} \PDO(2n)q^n = \left( \sum_{n\geq 0} \PDO(n)q^n \right)^2. 
\end{equation*}
The above results are easily proven via elementary generating function manipulations.  

In this work, our goal is to extend the above results by providing a two--parameter generalization of the above generating function identity.  We do so by considering various natural partition statistics and then utilizing ideas that appear in a 2013 paper of Andrews and Rose which date back a century to groundbreaking work of P. A. MacMahon on natural generalizations of divisors sums.  

\end{abstract}

\keywords{Integer partition, partition with designated summands, partition identity, MacMahon's generalized divisors sum, Chebyshev polynomial.
\newline \indent 2020 {\it Mathematics Subject Classification}. 11P84, 05A17, 05A15, 33B10.}

\maketitle

\section{Introduction and Motivation}\label{sec:intro}


In 2002, Andrews, Lewis, and Lovejoy \cite{ALL02} introduced the combinatorial objects which they called {\it partitions with designated summands}.  These are constructed by taking unrestricted integer partitions and designating exactly one of each occurrence of a part.  For example, from the five partitions of weight $4$ which are given by 
\begin{gather*}
    4,\ \ \  3+1,\ \ \  2+2,\ \ \   2+1+1,\ \ \   1+1+1+1,
\end{gather*}
we have ten partitions with designated summands of weight $4$:  
\begin{gather*}
    4',\ \ \  3'+1',\ \ \  2'+2,\ \ \  2+2',\ \ \  2'+1'+1,\ \ \  2'+1+1',\\
    1'+1+1+1,\ \ \  1+1'+1+1,\ \ \  1+1+1'+1,\ \ \  1+1+1+1'.
\end{gather*}
The authors denoted the number of partitions with designated summands of weight $n$ by $\PD(n)$.  Using this notation and the example above, we see that $\PD(4)=10$.

In \cite{ALL02}, Andrews, Lewis, and Lovejoy also considered the restricted partitions with designated summands wherein all parts must be odd, and they denoted the corresponding function by $\PDO(n)$.  Thus, from the example above, we see that $\PDO(4)=5$ given the following five objects:  
$$
3'+1',\ \ \  1'+1+1+1,\ \ \  1+1'+1+1,\ \ \  1+1+1'+1,\ \ \  1+1+1+1'.
$$
Andrews, Lewis, and Lovejoy noted \cite[(1.6)]{ALL02} that the generating function for $\PDO(n)$ is given by 
\begin{equation}
    \label{genfn_main}
    \sum_{n\geq 0} \PDO(n)q^n = \frac{f_4f_6^2}{f_1f_3f_{12}},
\end{equation}
where $f_r = (1-q^r)(1-q^{2r})(1-q^{3r})(1-q^{4r})\dots$ is the usual $q$--Pochhammer symbol, and we shall use the more standard notation $(q^a;q^b)_{\infty}=(1-q^a)(1-q^{a+b})(1-q^{a+2b})\cdots$ interchangeably.

Thanks to (\ref{genfn_main}), one can easily find the values of $\PDO(n)$ for small $n$.  Table \ref{table-PDO-values} provides the first several values of $\PDO(n)$.  

\begin{center}
\begin{table}
\begin{tabular}{ |c| *{11}{r|} }
\hline 
$n$  & 0 & 1 & 2 & 3 & 4 & 5 & 6 & 7 & 8 & 9 & 10 \\
\hline
$\PDO(n)$ & 1 & 1 & 2 & 4 & 5 & 8 & 12 & 16 & 22 & 32 & 42 \\
\hline 
\end{tabular}
\caption{Values of $\PDO(n)$} \label{table-PDO-values}
\end{table}
\end{center}

The primary focus of much of the recent research on the functions $\PD(n)$ and $\PDO(n)$ has been arithmetic, with an emphasis on divisibility properties satisfied by the two functions.  
(See \cite{ALL02, BO15, CJJS, CS24, Her23, HBN17, sel24, Xia16} for examples of such work.)  Our focus in this note is much different.  Namely, we wish you consider the following curious identity satisfied by $\PDO(n)$.

\begin{theorem}
\label{id:ori-pdo}  
 For all $n\geq 0$,
\begin{equation*}
\PDO(2n)=\sum_{k=0}^{n}\PDO(k)\PDO(n-k).
\end{equation*}
From a generating function perspective, this is equivalent to proving that 
\begin{equation}
\label{genfn:ori-pdo}
\sum_{n\geq 0} \PDO(2n)q^n = \left( \sum_{n\geq 0} \PDO(n)q^n \right)^2. 
\end{equation}
\end{theorem}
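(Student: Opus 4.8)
The plan is to prove the generating function form (\ref{genfn:ori-pdo}) directly from the product formula (\ref{genfn_main}), after which the coefficient identity follows by comparing powers of $q$. Write $F(q) = \sum_{n\geq 0}\PDO(n)q^n = \frac{f_4 f_6^2}{f_1 f_3 f_{12}}$. The numbers $\PDO(2n)$ are exactly the even-indexed coefficients of $F$, so the classical even-part extraction gives
\begin{equation*}
\sum_{n\geq 0}\PDO(2n)q^{2n} = \frac{F(q)+F(-q)}{2}.
\end{equation*}
Replacing $q^2$ by $q$ on the right-hand side, the desired identity (\ref{genfn:ori-pdo}) becomes equivalent to the single eta-quotient identity
\begin{equation*}
\frac{F(q)+F(-q)}{2} = F(q^2)^2.
\end{equation*}

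First I would compute $F(-q)$ explicitly. For even $r$ one has $f_r(-q) = f_r$, while for odd $r$ the standard transformation $(-q;-q)_\infty = f_2^3/(f_1 f_4)$, applied with $q$ replaced by $q^r$, yields $f_r(-q) = f_{2r}^3/(f_r f_{4r})$. Substituting these into (\ref{genfn_main}), the odd-index factors $f_1$ and $f_3$ are transformed while $f_4$, $f_6$, $f_{12}$ are unchanged, and after simplification one obtains
\begin{equation*}
F(-q) = \frac{f_1 f_3 f_4^2}{f_2^3 f_6}.
\end{equation*}
Likewise, replacing $q$ by $q^2$ in (\ref{genfn_main}) gives $F(q^2) = \frac{f_8 f_{12}^2}{f_2 f_6 f_{24}}$, and hence $F(q^2)^2 = \frac{f_8^2 f_{12}^4}{f_2^2 f_6^2 f_{24}^2}$.

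With these expressions in hand, the problem reduces to verifying the eta-quotient identity
\begin{equation*}
\frac{f_4 f_6^2}{f_1 f_3 f_{12}} + \frac{f_1 f_3 f_4^2}{f_2^3 f_6} = \frac{2 f_8^2 f_{12}^4}{f_2^2 f_6^2 f_{24}^2}.
\end{equation*}
I would clear denominators to recast this as an identity between holomorphic eta products. Every factor appearing here is supported on indices dividing $24$, and one checks that all three terms carry the same weight; consequently both sides are modular forms on $\Gamma_0(N)$ with $N \mid 24$ of equal weight, and the valence formula (Sturm's bound) reduces the verification to checking that the two $q$-expansions agree up to an explicit finite order. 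Alternatively, one can rewrite each eta quotient in terms of the classical theta functions $\varphi$ and $\psi$ and deduce the identity from known relations among them.

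The hard part will be this last step: establishing the eta-quotient identity rigorously. Computing $F(-q)$ and $F(q^2)$ and performing the reduction are entirely routine, but matching the two sides requires either setting up the modular-forms bookkeeping correctly (pinning down the level, confirming holomorphy at the cusps, and computing the Sturm bound) or spotting the right chain of classical theta identities. Everything upstream of it — the even-part extraction and the $q \mapsto -q$ transformation formulas — is standard.
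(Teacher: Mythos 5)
Your reduction is correct: the even-part extraction, the transformation $f_r(-q)=f_{2r}^3/(f_rf_{4r})$ for odd $r$, and the resulting expressions $F(-q)=\frac{f_1f_3f_4^2}{f_2^3f_6}$ and $F(q^2)^2=\frac{f_8^2f_{12}^4}{f_2^2f_6^2f_{24}^2}$ all check out, so the theorem is indeed equivalent to the single eta-quotient identity you display. This is exactly the ``elementary 2-dissection'' route that the paper attributes to the earlier literature (Andrews--Lewis--Lovejoy, Baruah--Ojah, Sellers) and deliberately does not reproduce; the paper itself obtains Theorem \ref{id:ori-pdo} only as the $x=1$ specialization of the refined Theorem \ref{thm:refine-pdo}, which it proves through the Andrews--Rose function $G(x,q)$, the theta functions $\varphi$ and $\psi$, and the Chebyshev sum-product identity. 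Your approach is more elementary and self-contained for this one statement, but yields no refinement; the paper's machinery is heavier but delivers the two-parameter generalization as well. Two caveats on your final step, which is where all the remaining content lives: the three eta quotients have weight $0$, so they are modular \emph{functions} rather than holomorphic forms, and to invoke Sturm's bound you must first multiply through by a suitable eta product to land in a space of holomorphic forms and verify nonnegative order at every cusp (also, the level must satisfy $24\mid N$, not $N\mid 24$, since $f_{24}$ appears). None of this is an obstacle --- the identity is true and the verification is routine --- but as written the proof is a plan for the decisive computation rather than the computation itself; carrying out the cusp analysis and the finite coefficient check (or citing the known 2-dissection of $\frac{f_4f_6^2}{f_1f_3f_{12}}$ from the references above) is what remains to make it complete.
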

This result has appeared in the past literature.  It is implicitly apparent in \cite{ALL02, sel24}, and it is explicitly called out in the final section of the paper of Baruah and Ojah \cite{BO15} where the authors note that it would be  ``interesting to find a combinatorial proof of this identity''.  In all of the above--mentioned works,  (\ref{genfn:ori-pdo}) was proved via elementary 2--dissection of the generating function given in (\ref{genfn_main}).   

\begin{example}
From Table \ref{table-PDO-values}, we see that $\PDO(8) = 22$ and 
$$
\sum_{k=0}^{4}\PDO(k)\PDO(4-k) = 1\cdot 5 + 1\cdot 4 + 2\cdot 2 + 4\cdot 1 + 5 \cdot 1 = 22
$$ 
which confirms Theorem \ref{id:ori-pdo} when $n=4$.  
\end{example} 


Let $\PDOset(n)$ denote the set of all PDO partitions of $n$, so that $\PDO(n) = |\PDOset(n)|$. Moreover, let $\PDOset:=\bigcup_{n\ge 0}\PDOset(n)$ with $\PDOset(0)$ containing only the empty partition $\varnothing$. 

We then consider the following refinement of Theorem \ref{id:ori-pdo}, which could be viewed as one of the main results of this paper. Let $\ld(\lambda)$ denote the number of different part sizes in the partition $\lambda$. We then have the following:  
\begin{theorem}\label{thm:refine-pdo}
For every $n\ge 0$, we have
\begin{align}\label{id:refine-pdo}
\sum_{\lambda\in\PDOset(2n)} x^{\ld(\lambda)} &= \sum_{k=0}^{n}\left(\sum_{\mu\in\PDOset(k)}x^{\ld(\mu)}\right)\left(\sum_{\nu\in\PDOset(n-k)}x^{\ld(\nu)}\right),
\end{align}
or equivalently, writing 
$$
PDO(x,q):=\sum_{\lambda\in\PDOset}x^{\ld(\lambda)}q^{\abs{\lambda}},
$$
we have for all $n\ge 0$,
\begin{align}\label{id:refine-pdo2}
[q^{2n}]PDO(x,q) &= [q^n]\left(PDO(x,q)\right)^2.
\end{align}
\end{theorem}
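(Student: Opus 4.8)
The plan is to start from an explicit product form of the bivariate generating function. Since a PDO partition is obtained by choosing, for each odd part size $s$ that occurs with multiplicity $m\ge 1$, exactly one of the $m$ occurrences to designate, each present odd part size $s$ contributes the factor $x\sum_{m\ge 1} m q^{sm} = \frac{xq^s}{(1-q^s)^2}$ (the $x$ recording its contribution to $\ld$), while an absent part size contributes $1$. Hence I would first record
\begin{equation*}
PDO(x,q)=\prod_{s\ \mathrm{odd}}\left(1+\frac{xq^s}{(1-q^s)^2}\right).
\end{equation*}
Next I would observe that, by extracting the even part in $q$, the identity \eqref{id:refine-pdo2} is equivalent to the single functional equation
\begin{equation*}
PDO(x,q)+PDO(x,-q)=2\,PDO(x,q^2)^2,
\end{equation*}
so the whole theorem reduces to proving this one relation.

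To evaluate the product I would bring in the MacMahon--Andrews--Rose device hinted at in the abstract: set $x=2-z-z^{-1}$, so that each factor becomes
\begin{equation*}
1+\frac{xq^s}{(1-q^s)^2}=\frac{1-(2-x)q^s+q^{2s}}{(1-q^s)^2}=\frac{(1-zq^s)(1-z^{-1}q^s)}{(1-q^s)^2}.
\end{equation*}
Telescoping the resulting product over odd $s$ gives $PDO(x,q)=\dfrac{(zq;q^2)_\infty (z^{-1}q;q^2)_\infty}{(q;q^2)_\infty^2}$, and then the Jacobi triple product collapses this to a clean ratio of theta functions,
\begin{equation*}
PDO(x,q)=\frac{\theta(z,q)}{\theta(1,q)},\qquad \theta(z,q):=\sum_{n\in\mathbb{Z}}(-1)^n z^n q^{n^2}.
\end{equation*}
This is exactly the point where the Chebyshev polynomials enter: pairing $n$ with $-n$ replaces $z^n+z^{-n}$ by $2T_n(1-x/2)$, so the numerator is a theta series with Chebyshev coefficients in $x$, matching the circle of ideas dating back to MacMahon.

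With this representation I would then attack the functional equation directly. Using $(-1)^{n^2}=(-1)^n$, one has $\theta(z,-q)=\sum_n z^n q^{n^2}$ and $\theta(1,-q)=\sum_n q^{n^2}$. Clearing denominators in the target equation and invoking the classical theta-null identity $\theta(1,q)\theta(1,-q)=\theta(1,q^2)^2$ (equivalently $\varphi(q)\varphi(-q)=\varphi(-q^2)^2$, which I would verify by a short Pochhammer computation), the equation reduces to the compact statement
\begin{equation*}
\theta(z,q)\,\theta(1,-q)+\theta(z,-q)\,\theta(1,q)=2\,\theta(z,q^2)^2.
\end{equation*}
Expanding the left side as a double sum over $(n,m)\in\mathbb{Z}^2$, the parity factor $(-1)^n+(-1)^m$ kills all terms with $n\not\equiv m\pmod 2$ and doubles the rest, and on the sublattice $n\equiv m\pmod 2$ the change of variables $(n,m)\mapsto\bigl(\tfrac{n+m}{2},\tfrac{n-m}{2}\bigr)=(k,l)$ is a bijection onto $\mathbb{Z}^2$ satisfying $n^2+m^2=2(k^2+l^2)$ and $z^n=z^{k+l}$, $(-1)^n=(-1)^{k+l}$; this turns the left side termwise into the right side.

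I expect the genuine content to lie in reaching the theta-quotient form and recognizing that the even-part identity collapses to the elementary lattice identity above; the final step is then just a change of summation variables. The step most likely to require care is the $q$-series bookkeeping in the middle: telescoping the product over \emph{odd} $s$ into the ratio of $(q;q^2)_\infty$-products, applying the triple product in the correct normalization, and — above all — the cancellation of the theta-null factors via $\theta(1,q)\theta(1,-q)=\theta(1,q^2)^2$, which is what makes the two independent denominators disappear so cleanly. As a useful consistency check along the way I would also note the companion product relation $PDO(x,q)\,PDO(x,-q)=PDO(4x-x^2,q^2)$, which follows immediately from the single-block computation $(1+xD(q^s))(1+xD(-q^s))=1+(4x-x^2)D(q^{2s})$ with $D(t)=t/(1-t)^2$, and which corresponds to $z\mapsto z^2$ in the theta picture.
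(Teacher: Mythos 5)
Your proof is correct, but it follows a genuinely different route from the paper's. The paper never forms the theta quotient: it instead works with $G(x,q)=1+2\sum_{n\ge1}T_{2n}(x/2)q^{n^2}$ and the overpartition factor $f_2/f_1^2$ separately, $2$-dissects each (Theorems \ref{id:Gxq-2-dis} and \ref{id:overpar-2-dis}), reduces the claim to the pair of Chebyshev identities in Lemma \ref{lem:main 2-dis}, and proves those by trigonometric sum-to-product manipulations together with the explicit bijections $f_e,f_o$ of Proposition \ref{prop:two bij}. Your substitution $x=2-z-z^{-1}$ followed by the Jacobi triple product packages all of this into the single identity $\theta(z,q)\theta(1,-q)+\theta(z,-q)\theta(1,q)=2\theta(z,q^2)^2$, and your change of variables $(n,m)\mapsto\bigl(\tfrac{n+m}{2},\tfrac{n-m}{2}\bigr)$ on the even sublattice of $\mathbb{Z}^2$ is exactly the paper's $f_e/f_o$ in disguise, except that summing over all of $\mathbb{Z}^2$ at once dispenses with the paper's case analysis (diagonal versus off-diagonal, even versus odd index). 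All the supporting computations check out: the product form of $PDO(x,q)$, the equivalence of \eqref{id:refine-pdo2} with $PDO(x,q)+PDO(x,-q)=2\,PDO(x,q^2)^2$, the theta-null identity $\varphi(q)\varphi(-q)=\varphi(-q^2)^2$, and the implicit point that an identity of polynomials in $x$ may be verified after the injective substitution $x\mapsto 2-z-z^{-1}$. What your approach buys is brevity and a single transparent lattice identity in place of two dissection lemmas; what the paper's longer route buys is the machinery (the huffing operator $\cH$, Theorem \ref{thm:GxGy}, Lemma \ref{lem:Cheby-xyuv}) needed for the symmetric generalization $\cH(G(x,q)G(y,q))=G(u,q)G(v,q)$ and the two-parameter refinement of Theorem \ref{thm:xy-refine} — though your theta formulation would deliver that too, since the same lattice computation gives $\cH(\theta(z,q)\theta(w,q))=\theta(zw,q^{2})\,\theta(zw^{-1},q^{2})\big|_{q^2\to q}$ up to the appropriate normalization.
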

An example of this result is worth sharing here.  

\begin{example}
We return to the case when $n=4$.  Of the 22 partitions into odd parts with designated summands which are counted by $\PDO(8)$, we see that there are 8 which contain exactly one part size (all of which are constructed from the ordinary partition $1+1+1+1+1+1+1+1$) and there are 14 which contain exactly two different part sizes (these are constructed from designating the parts in $7+1$, $5+3$, $5+1+1+1$, $3+3+1+1$, and $3+1+1+1+1+1$).  

If we denote by $(\mu | \nu)$ the pairs of $\PDO$ partitions where $\mu$ has weight $k$ and $\nu$ has weight $4-k$, then we see that we can naturally break these into two subsets.  First, the 8 pairs of $\PDO$ partitions with only one designated part are the following:  
\begin{gather*}
    (\varnothing | 1'+1+1+1), \ \ \  (\varnothing | 1+1'+1+1), \ \ \  (\varnothing | 1+1+1'+1), \ \ \  (\varnothing | 1+1+1+1'),\\
(1'+1+1+1 | \varnothing), \ \ \  (1+1'+1+1 | \varnothing), \ \ \ (1+1+1'+1 | \varnothing), \ \ \ (1+1+1+1' | \varnothing). 
\end{gather*}
Moreover, the 14 pairs of $\PDO$ partitions with exactly two designated parts are the following: 
\begin{gather*}
    (\varnothing | 3'+1'), \ \ \  (3'+1' | \varnothing), \ \ \  (1' | 3'), \ \ \ (3' | 1'), \\
(1'+1 | 1'+1), \ \ \  (1'+1 | 1+1'),\ \ \ (1+1' | 1'+1), \ \ \  (1+1' | 1+1'),\\
 \ \ \ (1' | 1'+1+1 ), \ \ \ (1' | 1+1'+1 ), \ \ \ (1' | 1+1+1' ), \\
 (1'+1+1 | 1'), \ \ \ (1+1'+1 | 1'), \ \ \ (1+1+1' | 1'). 
\end{gather*}
\end{example}


\section{Our Refinement and Necessary Mathematical Tools}\label{sec:tools}
We begin our journey towards a combinatorial proof of a refined version of Theorem \ref{id:ori-pdo} by considering the work of Andrews and Rose.  In \cite{AR13}, Andrews and Rose introduced the function 
\begin{align}\label{defn:Gxq}
G(x,q) &:= 1+2\sum_{n\geq 1} T_{2n}(x/2)q^{n^2}
\end{align}
where, for $n\geq 0,$ $T_n(x)$ is the $n^{th}$ Chebyshev polynomial (of the first kind) \cite[p.~101]{AARsf} and is defined by 
$$T_n(\cos\theta) = \cos(n\theta).$$
We note, in passing, that 
the sum-product identity 
\begin{align}\label{id:cos-sum-product}
\cos(A+B)+\cos(A-B)=2\cos(A)\cos(B)
\end{align}
extends naturally to these Chebyshev polynomials in the following sense.
\begin{proposition}[Andrews, Askey, and Roy \cite{AARsf}, (5.1.6)]
For $0\le m\le n$, we have
\begin{align}\label{id:cheby-sum-prod}
T_{n+m}(x)+T_{n-m}(x) &= 2T_m(x)T_n(x).
\end{align}
\end{proposition}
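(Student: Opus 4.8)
The plan is to reduce the polynomial identity (\ref{id:cheby-sum-prod}) to the elementary trigonometric identity (\ref{id:cos-sum-product}) by means of the defining substitution $x=\cos\theta$. First I would set $x=\cos\theta$ with $\theta\in[0,\pi]$, so that the defining relation $T_k(\cos\theta)=\cos(k\theta)$ converts each of the four Chebyshev polynomials appearing in (\ref{id:cheby-sum-prod}) into a cosine: the left-hand side becomes $\cos((n+m)\theta)+\cos((n-m)\theta)$, while the right-hand side becomes $2\cos(m\theta)\cos(n\theta)$. Here the hypothesis $0\le m\le n$ guarantees that $n-m\ge 0$, so that $T_{n-m}$ is a genuine Chebyshev polynomial and the expression $\cos((n-m)\theta)$ is unambiguous.

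Next I would invoke (\ref{id:cos-sum-product}) with $A=n\theta$ and $B=m\theta$. This reads $\cos(n\theta+m\theta)+\cos(n\theta-m\theta)=2\cos(n\theta)\cos(m\theta)$, which is precisely the equality of the two expressions obtained in the previous step. Hence (\ref{id:cheby-sum-prod}) holds for every $x$ of the form $\cos\theta$, that is, for every $x\in[-1,1]$.

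Finally, to upgrade this to an identity of polynomials valid for all $x\in\mathbb{R}$ (indeed all $x\in\mathbb{C}$), I would observe that both sides of (\ref{id:cheby-sum-prod}) are polynomials in $x$, and that they have just been shown to agree on the infinite set $[-1,1]$. Since a nonzero polynomial can have only finitely many roots, the difference of the two sides must vanish identically, yielding (\ref{id:cheby-sum-prod}) as a polynomial identity.

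There is really no hard step here: the content of the proposition is exactly the trigonometric identity (\ref{id:cos-sum-product}) dressed in Chebyshev clothing. The only point requiring a moment's care is the passage from the trigonometric identity, which is a priori valid only on $[-1,1]$, to the polynomial identity valid everywhere; this is handled by the standard principle that two polynomials agreeing on infinitely many points must coincide.
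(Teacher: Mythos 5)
Your proof is correct and is essentially the paper's own argument: both reduce the identity to the cosine sum--product formula via $x=\cos\theta$ and then conclude by noting that two polynomials agreeing on infinitely many points must coincide. (The paper additionally remarks that an inductive proof via the Chebyshev recurrence is possible, but its primary proof is the same as yours.)
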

\begin{proof}
Seeing that \eqref{id:cheby-sum-prod} is an identity between two polynomials in $\bR[x]$ with degree $n+m$ (recall that $T_n(x)$ is a polynomial in $x$ of degree $n$), it suffices to show that it holds for more than $n+m$ different values of $x$. Indeed, evaluating both sides of \eqref{id:cheby-sum-prod} at $x=\cos\theta,~0\le \theta\le \pi$ yields precisely the aforementioned sum-product identity satisfied by the cosine function. Alternatively, \eqref{id:cheby-sum-prod} follows from induction on the pair $(m,n)$ and the well-known recurrence relation for $T_n(x)$. Namely, for $n\ge 2$, we have
$$T_n(x)=2xT_{n-1}(x)-T_{n-2}(x).$$
\end{proof}

Andrews and Rose \cite{AR13} connected $G(x,q)$ with a particular variant of the sum-of-divisors function in the following sense. Consider $c_{n,k}:=\sum s_1\cdots s_k$ where the sum is taken over all partitions of $n$ of the form
$$n=s_1(2m_1-1)+\cdots+s_k(2m_k-1)$$
with $0<m_1<\cdots<m_k$. Note that for $k=1$, this is simply the sum over all divisors $j$ of $n$ such that $n/j$ is an odd number. For $k\ge 1$, denote by $C_k(q)=\sum_{n\ge 1}c_{n,k}q^n$ the generating function for $\set{c_{n,k}}_{n\ge 1}$ and assume the convention that $C_0(q):=1$. This family of functions $C_k(q)$ originated in the work of MacMahon \cite{Mac} and has recently received a great deal of attention from Bachmann \cite{Ba24}, Ono and Singh \cite{OS24}, and Amdeberhan, Ono, and Singh \cite{AOS24}.  

We now make the fundamental connection necessary to assist in our work below.  First, note that for a given $k$, $C_k(q)$ provides a natural refinement of the generating function of $\PDO(n)$ in the sense that the coefficient of $q^n$ in $C_k(q)$ provides the contribution to $\PDO(n)$ where the partitions in question have exactly $k$ different part sizes.  Said from a generating function perspective, 
$$
\sum_{n\geq 0} \PDO(n)q^n = \sum_{k\geq 0} C_k(q).
$$
Secondly, thanks to the work of Andrews and Rose \cite{AR13}, we also know the following:
\begin{theorem}[Andrews and Rose \cite{AR13}]
\begin{align}\label{id:G=Ck}
\frac{f_2}{f_1^2}G(x,q) &= \sum_{k\ge 0}C_k(q)x^{2k}.
\end{align}
\end{theorem}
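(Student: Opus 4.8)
The plan is to treat both sides of \eqref{id:G=Ck} as formal power series in $q$ whose coefficients are polynomials in $x$, and to unmask the left-hand side as a Jacobi triple product in disguise. The pivotal move is the substitution $x=2\cos\theta$, so that $x/2=\cos\theta$ and the defining relation $T_{2n}(\cos\theta)=\cos(2n\theta)$ collapses $G$ into a classical theta series. Writing $\cos(2n\theta)=\tfrac12\bigl(e^{2in\theta}+e^{-2in\theta}\bigr)$ and folding the $n\ge 1$ sum together with its mirror image, one obtains
\[
G(2\cos\theta,q)=\sum_{n\in\mathbb{Z}}e^{2in\theta}q^{n^2}.
\]
Because, for each fixed power of $q$, both sides of \eqref{id:G=Ck} are polynomials in $x$, it suffices to verify the claimed identity on the infinitely many values $x=2\cos\theta$ with $0\le\theta\le\pi$; this is precisely the polynomial-identity device that legitimizes the trigonometric substitution (compare the proof of Proposition giving \eqref{id:cheby-sum-prod}).

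Next I would apply the Jacobi triple product identity to the theta series, namely
\[
\sum_{n\in\mathbb{Z}}z^n q^{n^2}=\prod_{m\ge 1}(1-q^{2m})(1+zq^{2m-1})(1+z^{-1}q^{2m-1}),
\]
taken at $z=e^{2i\theta}$. The two conjugate factors multiply to $1+2\cos(2\theta)q^{2m-1}+q^{2(2m-1)}$, and here lies the one genuinely delicate step: one must convert $2\cos(2\theta)$ back into a polynomial in $x$. Using $2\cos(2\theta)=x^2-2$ (valid since $x=2\cos\theta$), this factor becomes $(1-q^{2m-1})^2+x^2q^{2m-1}$, whence
\[
G(x,q)=\prod_{m\ge 1}(1-q^{2m})(1-q^{2m-1})^2\left(1+\frac{x^2 q^{2m-1}}{(1-q^{2m-1})^2}\right).
\]

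I would then dispose of the $x$-free prefactor. Pulling out $\prod_{m\ge 1}(1-q^{2m})=f_2$ and $\prod_{m\ge 1}(1-q^{2m-1})^2=(q;q^2)_\infty^2$, and invoking the standard splitting $f_1=(q;q^2)_\infty f_2$, the quotient $\tfrac{f_2}{f_1^2}$ cancels the prefactor exactly, leaving
\[
\frac{f_2}{f_1^2}G(x,q)=\prod_{m\ge 1}\left(1+\frac{x^2 q^{2m-1}}{(1-q^{2m-1})^2}\right).
\]
Finally I would identify this product with $\sum_{k\ge 0}C_k(q)x^{2k}$ combinatorially. Expanding the product amounts to selecting a finite set of distinct odd parts $2m-1$, and each chosen factor contributes $x^2\cdot\frac{q^{2m-1}}{(1-q^{2m-1})^2}=x^2\sum_{s\ge 1}sq^{s(2m-1)}$; thus a term built from $k$ chosen parts carries $x^{2k}$ and weight $s_1\cdots s_k$ attached to $q^{s_1(2m_1-1)+\cdots+s_k(2m_k-1)}$, which is exactly the definition of $c_{n,k}$. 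I expect the main obstacle to be purely bookkeeping around the substitution $x=2\cos\theta$ — keeping the conversion $2\cos(2\theta)=x^2-2$ and the polynomial-identity justification airtight — rather than anything conceptually deep; once the theta series is recognized, the Jacobi triple product does the heavy lifting.
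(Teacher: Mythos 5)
Your proof is correct. Note first that the paper does not actually prove this theorem --- it is quoted from Andrews and Rose \cite{AR13} --- so there is no in-paper argument to match step for step; what can be compared is the later display \eqref{id:G=Ck new}, where the paper rewrites the quoted identity as $G(x,q)=(q^2;q^2)_\infty\prod_{m\ge 1}\bigl((1-q^{2m-1})^2+x^2q^{2m-1}\bigr)$, using exactly the expansion $\sum_{k\ge 0}C_k(q)x^{2k}=\prod_{m\ge 1}\bigl(1+x^2\tfrac{q^{2m-1}}{(1-q^{2m-1})^2}\bigr)$ that you establish combinatorially in your final step. Your argument supplies the missing direction: the substitution $x=2\cos\theta$ turning $G$ into $\sum_{n\in\mathbb{Z}}e^{2in\theta}q^{n^2}$, the Jacobi triple product at $z=e^{2i\theta}$, and the conversion $2\cos(2\theta)=x^2-2$ are all sound, the polynomial-identity device (each coefficient of $q^N$ on both sides is a polynomial in $x$, so agreement at infinitely many values of $x$ suffices) is the same one the paper uses to prove \eqref{id:cheby-sum-prod}, and the bookkeeping $f_1=(q;q^2)_\infty f_2$ cancels the prefactor as you claim. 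This is, in substance, the standard theta-function proof of the Andrews--Rose identity, and it is a self-contained and welcome addition given that the paper only cites the result.
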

As an aside, we note that $\displaystyle{\frac{f_2}{f_1^2}}$ has been studied extensively in the past since 
$$ 
\sum_{n\ge 0}\overline{p}(n)q^n =\frac{f_2}{f_1^2} 
$$ 
where $\overline{p}(n)$ counts the number of overpartitions of $n$ \cite{CL04}.  
 
The above comments imply that we can now define a ``refined'' version of $\PDO(n)$ which simultaneously takes into account the weight $n$ of the partitions in question as well as the number of different part sizes in the partitions. This prompted us to discover Theorem~\ref{thm:refine-pdo}.

In order to prove (\ref{id:refine-pdo2}), we will need the 2--dissections of $G(x,q)$ as well as $\displaystyle{\frac{f_2}{f_1^2}}$.  Thankfully, these are very straightforward.  First, it is clear from (\ref{defn:Gxq}) that the function $G(x,q)$ can be $2$-dissected easily. 
\begin{theorem}
\label{id:Gxq-2-dis}
We have 
\begin{equation*}
G(x,q) = 1+2\sum_{n\geq 1} T_{4n}\left({x}/{2}\right)q^{4n^2}+2q\sum_{n\geq 1} T_{4n-2}\left({x}/{2}\right)q^{4n^2-4n}.
\end{equation*}
\end{theorem}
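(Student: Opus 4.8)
The plan is to read off the 2-dissection directly from the defining series \eqref{defn:Gxq}, sorting its terms by the parity of the summation index $n$. No special identities are needed here; the only substantive point is to confirm that the two resulting pieces genuinely capture the even and the odd powers of $q$, so that the split is a bona fide 2-dissection rather than a cosmetic reindexing.

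Concretely, I would begin from
$$G(x,q) = 1+2\sum_{n\geq 1} T_{2n}(x/2)q^{n^2}$$
and break the sum over $n\ge 1$ into its even-indexed and odd-indexed terms. Substituting $n=2m$ in the first family and $n=2m-1$ in the second, each now summed over $m\ge 1$, turns the Chebyshev subscripts into $T_{4m}(x/2)$ and $T_{4m-2}(x/2)$, and the exponents of $q$ into $n^2=4m^2$ and $n^2=(2m-1)^2=4m^2-4m+1$, respectively. Pulling a single factor of $q$ out of the odd family then reproduces precisely the right-hand side of the theorem.

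Finally I would verify that this really is a 2-dissection. Since $4m^2$ is even for every $m$, the first sum together with the constant term $1=q^0$ comprises exactly the even powers of $q$; and since $4m^2-4m+1$ is odd for every $m$, the prefactored second sum comprises exactly the odd powers. The two families of exponents are therefore disjoint, and each piece is visibly a power series in $q^2$. I do not foresee any genuine obstacle: the entire argument reduces to splitting by the parity of $n$ and simplifying the shifted exponents, and the only insight required is to recognize that this parity split of the index $n$ is exactly what separates the even and odd powers of $q$.
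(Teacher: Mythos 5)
Your proof is correct and matches the paper's approach: the paper simply observes that the $2$-dissection is immediate from the definition \eqref{defn:Gxq}, and your parity split of the index $n$ (with $n^2=4m^2$ even and $(2m-1)^2=4m^2-4m+1$ odd) is exactly that argument, carried out explicitly.
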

Next, thanks to previous work involving $\overline{p}(n)$, we have the following $2$-dissection of the generating function for the overpartition function.
\begin{theorem}[Hirschhorn and Sellers \cite{HS05}]
\label{id:overpar-2-dis}
We have 
\begin{equation*}
\sum_{n\ge 0}\overline{p}(n)q^n =\frac{f_2}{f_1^2} =\frac{f_8^5}{f_2^4 f_{16}^2}+2q\frac{f_4^2 f_{16}^2}{f_2^4 f_8}.
\end{equation*}
\end{theorem}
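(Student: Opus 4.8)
The plan is to reduce the stated $2$-dissection to the much simpler $2$-dissection of the classical theta function $\varphi(q):=\sum_{n=-\infty}^{\infty}q^{n^2}$, whose two pieces are visible by the parity of the summation index. First I would record the product evaluations that translate between $q$-series and the $f_r$ notation, namely
\begin{align*}
\varphi(q)=\frac{f_2^5}{f_1^2f_4^2},\qquad \varphi(-q)=\frac{f_1^2}{f_2},\qquad \psi(q)=\frac{f_2^2}{f_1},
\end{align*}
where $\psi(q):=\sum_{n\ge 0}q^{n(n+1)/2}$; these are standard consequences of Jacobi's triple product. The key observation is that the left-hand side is itself the reciprocal of a theta function, since $\varphi(-q)=f_1^2/f_2$ gives $\frac{f_2}{f_1^2}=\frac{1}{\varphi(-q)}$.

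To turn this reciprocal into something dissectable, I would clear the ``odd'' behaviour from the denominator using the product identity $\varphi(q)\varphi(-q)=\varphi(-q^2)^2$, which one checks instantly in the $f_r$ notation since both sides equal $f_2^4/f_4^2$. Multiplying numerator and denominator by $\varphi(q)$ then yields
\begin{align*}
\frac{f_2}{f_1^2}=\frac{1}{\varphi(-q)}=\frac{\varphi(q)}{\varphi(q)\varphi(-q)}=\frac{\varphi(q)}{\varphi(-q^2)^2}=\frac{f_4^2}{f_2^4}\,\varphi(q),
\end{align*}
so the entire problem reduces to $2$-dissecting the single factor $\varphi(q)$, with the prefactor $f_4^2/f_2^4$ already a series in $q^2$.

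Next I would split $\varphi(q)=\sum_n q^{n^2}$ according to the parity of $n$. The even indices $n=2m$ contribute $\sum_m q^{4m^2}=\varphi(q^4)$, while the odd indices $n=2m+1$ contribute $\sum_m q^{4m^2+4m+1}=2q\sum_{m\ge 0}q^{4m(m+1)}=2q\,\psi(q^8)$, where the fold $m\mapsto -m-1$ collapses the bilateral sum to twice a unilateral one. This gives the classical dissection $\varphi(q)=\varphi(q^4)+2q\,\psi(q^8)$. Substituting this, together with the evaluations $\varphi(q^4)=f_8^5/(f_4^2f_{16}^2)$ and $\psi(q^8)=f_{16}^2/f_8$ obtained by replacing $q$ with $q^4$ and $q^8$ above, and simplifying the two resulting products, reproduces exactly the stated terms $\tfrac{f_8^5}{f_2^4f_{16}^2}$ and $2q\tfrac{f_4^2f_{16}^2}{f_2^4f_8}$.

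The only genuinely delicate point is the passage from the reciprocal $1/\varphi(-q)$ to a dissectable form; everything after the factorization $\varphi(q)\varphi(-q)=\varphi(-q^2)^2$ is routine bookkeeping. One could alternatively manipulate the theta products directly, but the route above---converting the reciprocal into $\frac{f_4^2}{f_2^4}\varphi(q)$ and then splitting $\varphi(q)$ by parity---is the most transparent. I therefore expect the main effort to lie in establishing the three product evaluations via Jacobi's triple product, after which the dissection and final simplification follow mechanically.
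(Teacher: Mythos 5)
Your proposal is correct. Note that the paper itself offers no proof of this identity---it is quoted verbatim from Hirschhorn and Sellers \cite{HS05}---so there is nothing internal to compare against; what you have supplied is a self-contained derivation. Each step checks out: $\varphi(-q)=f_1^2/f_2$ gives $f_2/f_1^2=1/\varphi(-q)$; the factorization $\varphi(q)\varphi(-q)=\varphi(-q^2)^2=f_2^4/f_4^2$ converts the reciprocal into $\frac{f_4^2}{f_2^4}\varphi(q)$ with an even-power prefactor; and splitting $\varphi(q)=\varphi(q^4)+2q\,\psi(q^8)$ by parity of the summation index, then substituting $\varphi(q^4)=f_8^5/(f_4^2f_{16}^2)$ and $\psi(q^8)=f_{16}^2/f_8$, lands exactly on the two stated terms. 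This is essentially the standard route (the one used in Hirschhorn's book and in the cited paper), and it meshes well with the present paper, which already records $\varphi(q)=f_2^5/(f_1^2f_4^2)$ and $\psi(q)=f_2^2/f_1$. The only cosmetic discrepancy is that the paper writes $\psi(q)=\sum_{n\ge 1}q^{\binom{n}{2}}$ while you use $\sum_{n\ge 0}q^{n(n+1)/2}$; these are the same series under the shift $n\mapsto n+1$, so nothing is affected.
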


Combining the fact that 
$$PDO(x,q)=\sum_{k\ge 0}C_k(q)x^{k}$$
with \eqref{id:G=Ck} reveals that \eqref{id:refine-pdo2} is equivalent to the following identity: 
\begin{align}\label{id:Gx}
[q^{2n}]\left(\frac{f_2}{f_1^2}G(x,q)\right) &= [q^n]\left(\frac{f_2}{f_1^2}G(x,q)\right)^2.
\end{align}
In view of Theorem \ref{id:Gxq-2-dis} and Theorem \ref{id:overpar-2-dis}, this is equivalent to
\begin{align*}
\left.\left(\frac{f_8^5}{f_2^4f_{16}^2}\left(1+2\sum_{n\ge 1}T_{4n}q^{4n^2}\right)+4q^2\frac{f_4^2f_{16}^2}{f_2^4f_8}\sum_{n\ge 1}T_{4n-2}q^{4n^2-4n}\right)\right\vert_{q^2\to q}=\frac{f_2^2}{f_1^4}\left(1+2\sum_{n\ge 1}T_{2n}q^{n^2}\right)^2,
\end{align*}
which simplifies to 
\begin{align}\label{id:root}
\frac{f_4^5}{f_2^2f_8^2}\left(1+2\sum_{n\ge 1}T_{4n}q^{2n^2}\right) + 4q\frac{f_8^2}{f_4}\left(\sum_{n\ge 1}T_{4n-2}q^{2n^2-2n}\right) &= \left(1+2\sum_{n\ge 1}T_{2n}q^{n^2}\right)^2,
\end{align}
where we have suppressed the argument $x/2$ in all Chebyshev polynomials $T_n(x/2)$, as this will be tacitly understood in the situation when all Chebyshev polynomials occurring in an expression have the same argument. The two eta-quotients\footnote{Recall the Dedekind eta function $\eta(\tau)=e^{\frac{\pi i\tau}{12}}\prod_{n=1}^{\infty}(1-e^{2n\pi i\tau})=q^{\frac{1}{24}}f_1$ wherein $q=e^{2\pi i\tau}$ with $\mathrm{Im}(\tau)>0$, so we customarily refer to an expression like $\frac{\prod_{i=1}^uf^{r_i}_{\alpha_i}}{\prod_{i=1}^tf_{\gamma_i}^{s_i}}$ as an eta quotient, where $\alpha_i$ and $\gamma_i$ are positive and distinct integers, $r_i,s_i>0$, and $u,t\ge 0$.} appearing on the left-hand-side of \eqref{id:root} are effectively the product representations of the two classical theta functions studied by Gauss, Jacobi and Ramanujan \cite[(1.5.6) and (1.5.7)]{Hir}:
\begin{align*}
\varphi(q) &:= 1+2\sum_{n\ge 1}q^{n^2}=\frac{f_2^5}{f_1^2f_4^2},\\
\psi(q) &:= \sum_{n\ge 1}q^{\binom{n}{2}}=\frac{f_2^2}{f_1}.
\end{align*}

Making necessary changes of variables (i.e., $q\to q^2$ in $\varphi(q)$ and $q\to q^4$ in $\psi(q)$) and substituting the above two expressions into \eqref{id:root}, we see that it now suffices to show the following lemma, which is essentially \eqref{id:root} after another round of $2$-dissection.

\begin{lemma}\label{lem:main 2-dis}
The following identities hold:
\begin{align*}
\left(1+2\sum_{n\ge 1}q^{2n^2}\right)\left(1+2\sum_{n\ge 1}T_{4n}q^{2n^2}\right) &= \left(1+2\sum_{n\ge 1}T_{4n}q^{4n^2}\right)^2+4q^2\left(\sum_{n\ge 1}T_{4n-2}q^{4n(n-1)}\right)^2,\\
\left(\sum_{n\ge 1}q^{2n(n-1)}\right)\left(\sum_{n\ge 1}T_{4n-2}q^{2n(n-1)}\right) &= \left(1+2\sum_{n\ge 1}T_{4n}q^{4n^2}\right)\left(\sum_{n\ge 1}T_{4n-2}q^{4n(n-1)}\right).
\end{align*}
\end{lemma}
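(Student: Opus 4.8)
The plan is to prove both identities by a single mechanism: pass from the one--sided sums to bilateral theta--type sums over $\mathbb{Z}$, expand each product into a double sum over the lattice $\mathbb{Z}^2$, linearize the resulting products of Chebyshev polynomials via \eqref{id:cheby-sum-prod}, and finally apply the lattice substitution $(m,n)\mapsto(m+n,m-n)$ together with a split according to parity. The underlying principle is that $q^{2m^2+2n^2}=q^{(m+n)^2+(m-n)^2}$, so that the ``twice a sum of two squares'' exponents on the left become ordinary sums of two squares whose summands sort themselves into the two pieces on the right according to whether $m+n$ is even or odd.

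The first step is bilateralization. Since $T_{-j}=T_j$ and each quadratic exponent is invariant under the relevant reflection of the summation index, the symmetric sums fold up cleanly, e.g.
$$1+2\sum_{n\ge 1}T_{4n}q^{2n^2}=\sum_{n\in\mathbb{Z}}T_{4n}q^{2n^2},\qquad 1+2\sum_{n\ge 1}q^{2n^2}=\sum_{n\in\mathbb{Z}}q^{2n^2},$$
whereas the half--integer--centered sums merely double under $n\mapsto 1-n$, so that
$$\sum_{n\ge 1}T_{4n-2}q^{4n(n-1)}=\tfrac12\sum_{n\in\mathbb{Z}}T_{4n-2}q^{4n(n-1)}.$$
With this in hand, the left--hand side of the first identity becomes $\sum_{m,n\in\mathbb{Z}}T_{4n}\,q^{2m^2+2n^2}$. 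Setting $s=m+n,\ t=m-n$ (so $s\equiv t\pmod 2$, $q^{2m^2+2n^2}=q^{s^2+t^2}$, and $T_{4n}=T_{2(s-t)}$) and splitting into the both--even and both--odd cases, I would then match these two pieces against the two right--hand terms after using \eqref{id:cheby-sum-prod} to write $T_{4a}T_{4b}=\tfrac12\bigl(T_{4(a+b)}+T_{4(a-b)}\bigr)$ and $T_{4a-2}T_{4b-2}=\tfrac12\bigl(T_{4(a+b)-4}+T_{4(a-b)}\bigr)$; the reflections $b\mapsto-b$ (even case) and $b\mapsto 1-b$ (odd case) collapse each $\tfrac12(T_\ast+T_\ast)$ into a single Chebyshev term, producing exactly the even--even and odd--odd sublattice sums coming from the left.

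The second identity runs along the same lines, with the change of variables shifted onto the odd sublattice. After bilateralizing, its left--hand side is $\tfrac14\sum_{m,n\in\mathbb{Z}}T_{4n-2}\,q^{2m(m-1)+2n(n-1)}$; writing the odd indices $p=2m-1,\ r=2n-1$ turns the exponent into $\tfrac12(p^2+r^2)-1$ and $T_{4n-2}$ into $T_{2r}$, and then $s=(p+r)/2,\ t=(p-r)/2$ gives $q^{s^2+t^2-1}$ with $s+t$ odd. The right--hand side, treated by the same linearization with $p=2a$ even and $r=2b-1$ odd, yields a sum over $s$ even, $t$ odd; the symmetry of the summand under $s\leftrightarrow t$ supplies the factor of $2$ that reconciles the two expressions.

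The main obstacle, and essentially the only delicate point, is the bookkeeping of numerical constants: the factors of $\tfrac12$ generated by the Chebyshev linearization and by the bilateralization of the half--integer sums must be balanced against the factors of $2$ produced by the reflection and $s\leftrightarrow t$ symmetries, so that the two sides agree coefficient--by--coefficient in $q$. I would therefore organize the argument by first rewriting each side explicitly as a sum over a sublattice of $\mathbb{Z}^2$ cut out by a single parity condition, and only afterward verify that these sublattice sums coincide term by term; doing so isolates the constant--tracking into one transparent comparison rather than letting the halves and doublings propagate through the whole computation.
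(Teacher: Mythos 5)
Your proposal is correct and follows essentially the same route as the paper: the paper also expands each side into a double sum, applies the substitution $(n,m)\mapsto(n+m,n-m)$ (resp.\ $(n+m-1,n-m)$) together with a parity split, and linearizes the products via the sum--product identity \eqref{id:cheby-sum-prod} --- phrased there in the cosine language $T_n(\cos\theta)=\cos(n\theta)$ and carried out for the more general four-variable Lemma~\ref{lem:Cheby-xyuv}, of which Lemma~\ref{lem:main 2-dis} is the specialization $y=2$, $u=v=x$. Your bilateralization over $\mathbb{Z}^2$ with the reflections $b\mapsto -b$ and $b\mapsto 1-b$ is just a cleaner bookkeeping of the same computation that the paper organizes with one-sided sums and the explicit bijections $f_e$ and $f_o$ of Proposition~\ref{prop:two bij}.
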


We have now worked through the steps necessary to show that Theorem \ref{thm:refine-pdo} holds once we prove Lemma \ref{lem:main 2-dis}. As it turns out, a result more general than Lemma \ref{lem:main 2-dis} actually holds true; see Theorem~\ref{thm:GxGy} and Lemma~\ref{lem:Cheby-xyuv}. Furthermore, Theorem \ref{thm:GxGy} also enjoys a partition theoretical interpretation in terms of refined counting of PDO partitions with respect to new partition statistics; see Theorem \ref{thm:xy-refine}, wherein \eqref{id:P1=P2} generalizes \eqref{id:refine-pdo2} with one more parameter $y$. We state and prove these results in the next two  sections.

\section{A symmetric generalization and a further refinement}

We begin with an observation that is quite useful for $2$-dissection. For any $q$-series $F:=F(q)$, which is allowed to  contain parameters like $x,y,\ldots$, let $\cH$ denote the $2$-dissection operator defined by $\cH\left( \sum_{n\geq 0} a_nq^n\right) = \sum_{n\geq 0}a_{2n}q^n$ (cf. the ``huffing'' operator in~\cite[Sect.~6.3]{Hir}).

\begin{lemma}\label{lem:gen lemma}
Suppose $A(q)$, $B(q)$, and $C(q)$ are any $q$-series. Then we have
\begin{align*}
\cH(A(q^2)B(q))=A(q)\cH(B(q)).
\end{align*} 
In particular, when $A(q)$ is in the form of an eta-quotient, then so is $A(q)^{-1}$, and consequently in this case, $\cH(B(q))=C(q)$ if and only if $\cH(A(q^2)B(q))=A(q)C(q)$.
\end{lemma}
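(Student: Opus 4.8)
The plan is to prove the displayed identity by a direct comparison of coefficients, and then to deduce the ``if and only if'' statement by invoking the invertibility of eta-quotients. First I would write $A(q)=\sum_{m\ge 0}a_mq^m$ and $B(q)=\sum_{n\ge 0}b_nq^n$, so that $A(q^2)=\sum_{m\ge 0}a_mq^{2m}$. The key structural observation is that $A(q^2)$ contributes only \emph{even} powers of $q$. Multiplying out, the coefficient of $q^N$ in $A(q^2)B(q)$ is $\sum_{m\ge 0}a_mb_{N-2m}$ (with the convention $b_j=0$ for $j<0$). Applying $\cH$, which retains the even-indexed coefficients and halves their exponents, the coefficient of $q^k$ in $\cH(A(q^2)B(q))$ is therefore $\sum_{m\ge 0}a_mb_{2k-2m}$. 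On the other side, $\cH(B(q))=\sum_{k\ge 0}b_{2k}q^k$, so the coefficient of $q^k$ in $A(q)\cH(B(q))$ is $\sum_{m\ge 0}a_mb_{2(k-m)}=\sum_{m\ge 0}a_mb_{2k-2m}$. The two coefficients agree for every $k$, which establishes $\cH(A(q^2)B(q))=A(q)\cH(B(q))$.

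For the second assertion, I would first note that an eta-quotient of the form $\prod_{i}f_{\alpha_i}^{r_i}/\prod_{i}f_{\gamma_i}^{s_i}$ has reciprocal $\prod_{i}f_{\gamma_i}^{s_i}/\prod_{i}f_{\alpha_i}^{r_i}$, which is again an eta-quotient under the paper's convention permitting an empty numerator or denominator; hence $A(q)^{-1}$ is an eta-quotient. Crucially, since each factor $f_r=(q^r;q^r)_\infty$ has constant term $1$, the series $A(q)$ is a unit in $\mathbb{C}[[q]]$, i.e. it is invertible as a formal power series. The ``if and only if'' then follows from the main identity: if $\cH(B(q))=C(q)$, substituting into $\cH(A(q^2)B(q))=A(q)\cH(B(q))$ gives $\cH(A(q^2)B(q))=A(q)C(q)$; conversely, if $\cH(A(q^2)B(q))=A(q)C(q)$, the same identity yields $A(q)\cH(B(q))=A(q)C(q)$, and multiplying through by the unit $A(q)^{-1}$ cancels $A(q)$ to leave $\cH(B(q))=C(q)$.

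There is no serious obstacle here; the argument is elementary bookkeeping. The only points requiring care are tracking the halving of exponents correctly under $\cH$ and recognizing that the role of the eta-quotient hypothesis is precisely to guarantee that $A(q)$ is an invertible power series (constant term $1$), which is exactly what licenses the cancellation in the backward direction.
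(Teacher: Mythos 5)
Your proof is correct; the coefficient computation showing that both sides of $\cH(A(q^2)B(q))=A(q)\cH(B(q))$ have $k$-th coefficient $\sum_{m}a_mb_{2k-2m}$ is exactly the elementary verification intended, and your observation that the eta-quotient hypothesis serves only to make $A(q)$ a unit in $\mathbb{C}[[q]]$ (constant term $1$) correctly isolates what licenses the cancellation in the ``if and only if.'' The paper in fact states this lemma without proof, treating it as a routine observation, so your write-up supplies precisely the bookkeeping the authors left implicit.
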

Now we can rewrite \eqref{id:Gx} using the operator $\cH$:
\begin{align*}
\cH\left(\frac{f_2}{f_1^2}G(x,q)\right) &= \frac{f_2^2}{f_1^4}G(x,q)^2.
\end{align*}
Applying Lemma \ref{lem:gen lemma} with $A(q)=f_1^4/f_2^2$ to the above identity, we see that \eqref{id:Gx} is equivalent to 
\begin{align*}
\cH\left(\frac{f_2^5}{f_1^2f_4^2}G(x,q)\right) &= G(x,q)^2,
\end{align*}
or rather
\begin{align}\label{id:G2Gy}
\cH(G(2,q)G(x,q)) &= G(x,q)^2,
\end{align}
after realizing that $G(2,q)=1+2\sum_{n>0}q^{n^2}=\frac{f_2^5}{f_1^2f_4^2}$. With \eqref{id:Gx} reformulated as \eqref{id:G2Gy}, it seems fitting to call the following result a symmetric generalization of \eqref{id:Gx}.

\begin{theorem}\label{thm:GxGy}
Given four variables $x,y,u,v$ satisfying the relation
\begin{equation}\label{correlation}
\begin{cases}
u+v=xy,\\
uv=x^2+y^2-4,
\end{cases}
\end{equation}
we have
\begin{equation}\label{id:GxGy}
\cH(G(x,q)G(y,q))=G(u,q)G(v,q).
\end{equation}
\end{theorem}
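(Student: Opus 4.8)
The plan is to treat both sides of \eqref{id:GxGy} as power series in $q$ whose $q^N$-coefficients are polynomials in $x$ and $y$, and to verify the identity after the trigonometric substitution $x=2\cos\alpha$, $y=2\cos\beta$. First I would rewrite the defining series \eqref{defn:Gxq} as a bilateral sum, using $T_{2n}=T_{-2n}$ and $T_0=1$, so that $G(x,q)=\sum_{n\in\mathbb{Z}}T_{2n}(x/2)\,q^{n^2}$. Multiplying out gives
$$
G(x,q)G(y,q)=\sum_{m,n\in\mathbb{Z}}T_{2m}(x/2)\,T_{2n}(y/2)\,q^{m^2+n^2},
$$
and the dissection operator $\cH$ retains exactly the terms with $m^2+n^2$ even, i.e. with $m\equiv n\pmod 2$. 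On this sublattice the map $(m,n)\mapsto(a,b)=\big(\tfrac{m+n}{2},\tfrac{m-n}{2}\big)$ is a bijection onto $\mathbb{Z}^2$ with $m=a+b$, $n=a-b$ and $(m^2+n^2)/2=a^2+b^2$, so that
$$
\cH\big(G(x,q)G(y,q)\big)=\sum_{a,b\in\mathbb{Z}}T_{2(a+b)}(x/2)\,T_{2(a-b)}(y/2)\,q^{a^2+b^2},
$$
while the right-hand side is the manifestly similar sum $G(u,q)G(v,q)=\sum_{a,b\in\mathbb{Z}}T_{2a}(u/2)\,T_{2b}(v/2)\,q^{a^2+b^2}$.

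Next I would interpret the constraint \eqref{correlation}. Writing $x=2\cos\alpha$ and $y=2\cos\beta$, the quantities $u,v$ are the roots of $t^2-xy\,t+(x^2+y^2-4)$, and a short computation with \eqref{id:cos-sum-product} shows these roots are exactly $2\cos(\alpha+\beta)$ and $2\cos(\alpha-\beta)$. Since the right-hand side of \eqref{id:GxGy} is symmetric in $u,v$, I may take $u=2\cos(\alpha+\beta)$ and $v=2\cos(\alpha-\beta)$ without loss. Now $T_{2k}(\cos\theta)=\cos(2k\theta)$ turns every Chebyshev factor into a cosine, so matching the two double sums coefficient-by-coefficient (i.e. for each fixed value $a^2+b^2$) reduces to the trigonometric identity
$$
\sum_{a,b}\cos\!\big(2(a+b)\alpha\big)\cos\!\big(2(a-b)\beta\big)\,q^{a^2+b^2}
= \sum_{a,b}\cos\!\big(2a(\alpha+\beta)\big)\cos\!\big(2b(\alpha-\beta)\big)\,q^{a^2+b^2}.
$$
Applying the product-to-sum formula \eqref{id:cos-sum-product} to each side, the left side becomes a sum of $\cos(2a(\alpha+\beta)+2b(\alpha-\beta))$ and $\cos(2b(\alpha+\beta)+2a(\alpha-\beta))$, and the right side a sum of $\cos(2a(\alpha+\beta)+2b(\alpha-\beta))$ and $\cos(2a(\alpha+\beta)-2b(\alpha-\beta))$. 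The symmetry $a\leftrightarrow b$ collapses the second left-hand term onto the first, and the symmetry $b\mapsto-b$ collapses the second right-hand term onto the first (both preserving $q^{a^2+b^2}$), so both sides equal $\sum_{a,b}\cos(2a(\alpha+\beta)+2b(\alpha-\beta))\,q^{a^2+b^2}$.

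Finally, to remove the restriction to real arguments, I would note that the $q^N$-coefficient of the left-hand side is visibly a polynomial in $x,y$, while that of the right-hand side is a symmetric polynomial in $u,v$ and hence, via \eqref{correlation}, a polynomial in $xy$ and $x^2+y^2-4$, i.e. again a polynomial in $x,y$. The two polynomials agree for all $(x,y)\in(-2,2)^2$ by the computation above, so they agree identically. I expect the main obstacle to be the second paragraph: recognizing the correct parametrization $u=2\cos(\alpha\pm\beta)$ hidden inside \eqref{correlation} and then arranging the product-to-sum expansions so that the two different symmetry operations ($a\leftrightarrow b$ on one side, $b\mapsto-b$ on the other) bring both sides to the same bilateral sum. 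A purely algebraic alternative would bypass the trigonometric substitution by proving the corresponding two-argument Chebyshev identity directly from \eqref{id:cheby-sum-prod}, but the cosine route keeps the bookkeeping transparent.
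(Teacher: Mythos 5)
Your proposal is correct, and while it rests on the same core mechanism as the paper's proof --- substituting $x=2\cos\alpha$, $y=2\cos\beta$ (whence $u,v$ become $2\cos(\alpha\pm\beta)$), converting Chebyshev factors to cosines, and applying the product-to-sum identity \eqref{id:cos-sum-product} together with the index change $(m,n)\mapsto(m+n,m-n)$ --- your bookkeeping is genuinely different and noticeably leaner. The paper first $2$-dissects $G$ via Theorem \ref{id:Gxq-2-dis}, which splits the claim into the two separate identities of Lemma \ref{lem:Cheby-xyuv} (equivalently Lemma \ref{lem:new 2-dis}); each is then proved by breaking unilateral sums over $n,m>0$ into diagonal and off-diagonal pieces and transporting the off-diagonal pieces with the bijections $f_e$ and $f_o$ of Proposition \ref{prop:two bij}. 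By writing $G(x,q)=\sum_{n\in\mathbb{Z}}T_{2n}(x/2)q^{n^2}$ as a bilateral sum, you let $\cH$ act as the restriction to the sublattice $m\equiv n\pmod 2$, and the single unimodular change of variables $(a,b)=\bigl(\tfrac{m+n}{2},\tfrac{m-n}{2}\bigr)$ handles both parity classes at once; the two symmetries $a\leftrightarrow b$ and $b\mapsto-b$ then replace all of the paper's case analysis. Your closing polynomial-identity argument (the $q^N$-coefficient of the right side is symmetric in $u,v$, hence a polynomial in $xy$ and $x^2+y^2-4$) correctly lifts the result from $x,y\in(-2,2)$ to arbitrary variables, mirroring the degree argument the paper uses for \eqref{id:cheby-sum-prod}. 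What the paper's route buys is the explicit dissection Lemma \ref{lem:main 2-dis}/\ref{lem:Cheby-xyuv}, which it needs anyway as the bridge back to \eqref{id:root} and Theorem \ref{thm:refine-pdo}; what yours buys is a shorter, unified verification of \eqref{id:GxGy} itself.
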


We shall prove Theorem~\ref{thm:GxGy} in the next section. Returning to our combinatorial perspective, we proceed to introduce two double parametrized generating functions with respect to two new partition statistics, and then state a further refinement of Theorem \ref{thm:refine-pdo}, which could be viewed as a partition theoretical interpretation of Theorem~\ref{thm:GxGy}.

For a PDO partition $\lambda\in\PDOset$, let $\ldo(\lambda)$ denote  the number of different parts that occur an odd number of times in $\lambda$, and write 
$$P_1(x,y,q):=\sum_{\lambda\in\PDOset}x^{\ld(\lambda)}y^{\ldo(\lambda)}q^{|\lambda|}.$$ 
For instance, we see $\ldo(7'+3+3'+1+1+1+1'+1)=2$ since parts $7$ and $1$ both occur an odd number of times while the part $3$ does not. For a PDO partition pair $(\mu | \nu)$, we use $\lr(\mu,\nu)$ to denote the number of different part sizes that occur simultaneously  in  $\mu$ and $\nu$. We also introduce $$P_2(x,y,q):=\sum_{(\mu | \nu)\in\PDOset\times\PDOset}x^{\ld(\mu)+\ld(\nu)}y^{2\lr(\mu,\nu)}q^{|\mu|+|\nu|}.$$ 
As an example, we have $\lr(3'+3+1+1+1'| 5'+5+3')=1$ since only the part $3$ can be found in both $\mu$ and $\nu$. 

Both generating functions $P_1(x,y,q)$ and $P_2(x,y,q)$ factor nicely.

\begin{theorem}\label{thm:expression-P1-P2}
We have the following: 
\begin{align}
\label{eq:P1}
P_1(x,y,q) &=\prod_{m\ge 1}\left(1+2x\frac{q^{4m-2}}{(1-q^{4m-2})^2}+xy\frac{q^{2m-1}(1+q^{4m-2})}{(1-q^{4m-2})^2}\right),\\
P_2(x,y,q) &=\prod_{m\ge 1}\left(1+2x\frac{q^{2m-1}}{(1-q^{2m-1})^2}+x^2y^2\left(\frac{q^{2m-1}}{(1-q^{2m-1})^2}\right)^2\right).
\label{eq:P2}
\end{align}
\end{theorem}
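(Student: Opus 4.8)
The plan is to exploit the fact that both generating functions are \emph{multiplicative over distinct part sizes}. Since every PDO partition is a partition into odd parts, it is completely described by recording, for each admissible part size $d=2m-1$ ($m\ge 1$), the multiplicity with which $d$ occurs together with the choice of its designated occurrence. Crucially, every statistic in play---$\ld$, $\ldo$, $\lr$, the weight $\abs{\lambda}$, and the number of ways to designate summands---decomposes into independent contributions, one per part size (additively for the first four, multiplicatively for the designations). Consequently both $P_1(x,y,q)$ and $P_2(x,y,q)$ factor as infinite products over $m\ge 1$, and it suffices to identify the local factor attached to the single part size $d=2m-1$ in each case.

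First I would compute the local factor $L_m$ for $P_1$. Fix $d=2m-1$ and suppose $d$ occurs with multiplicity $s\ge 0$ in $\lambda$. When $s\ge 1$ this contributes $1$ to $\ld(\lambda)$, contributes $1$ to $\ldo(\lambda)$ precisely when $s$ is odd, adds $sd$ to the weight, and admits exactly $s$ choices of designated occurrence; when $s=0$ it contributes nothing. Writing $t=q^d$, the local factor is therefore
\begin{equation*}
L_m = 1 + x\sum_{s\ge 1} s\, y^{[s\text{ odd}]} t^{s}
    = 1 + xy\!\!\sum_{\substack{s\ge 1\\ s\text{ odd}}}\!\! s\, t^{s}
         + x\!\!\sum_{\substack{s\ge 2\\ s\text{ even}}}\!\! s\, t^{s}.
\end{equation*}
The two inner sums follow from the standard identity $\sum_{s\ge 1}s\,t^s=t/(1-t)^2$ together with $\sum_{s\ge 1,\,s\text{ odd}}s\,t^s=t(1+t^2)/(1-t^2)^2$ (obtained by setting $s=2k+1$), whence $\sum_{s\ge 2,\,s\text{ even}}s\,t^s=2t^2/(1-t^2)^2$ by subtraction. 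Substituting $t=q^{2m-1}$, so that $t^2=q^{4m-2}$, reproduces precisely the factor displayed in \eqref{eq:P1}.

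The local factor for $P_2$ is found the same way, now over pairs. Fix $d=2m-1$ and let $a$ and $b$ be the multiplicities of $d$ in $\mu$ and $\nu$. Then $d$ contributes $[a\ge 1]+[b\ge 1]$ to $\ld(\mu)+\ld(\nu)$, contributes $1$ to $\lr(\mu,\nu)$ (hence $2$ to the $y$-exponent $2\lr(\mu,\nu)$) exactly when $a\ge 1$ and $b\ge 1$, adds $(a+b)d$ to the weight, and admits $ab$ designations when both are positive (reducing to $a$, $b$, or $1$ in the degenerate cases). Splitting the double sum over $(a,b)$ into the four regions cut out by the vanishing of $a$ and $b$, and applying $\sum_{a\ge 1}a\,t^a=t/(1-t)^2$ in each nonempty factor, the local factor collapses to $1+2x\,t/(1-t)^2+x^2y^2\bigl(t/(1-t)^2\bigr)^2$ with $t=q^{2m-1}$, which is exactly \eqref{eq:P2}.

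I expect no serious obstacle here: the one point requiring genuine care is the parity-weighted sum $\sum_{s\ge 1}s\,y^{[s\text{ odd}]}t^s$ for $P_1$, where the marker $y$ attaches only to odd multiplicities and thereby forces the odd/even split above. Everything else is routine bookkeeping of mutually independent local contributions, so the real content of the theorem lies in correctly isolating these per-part-size generating functions rather than in any delicate summation.
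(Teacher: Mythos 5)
Your proposal is correct and follows essentially the same route as the paper: both exploit multiplicativity over the odd part sizes and compute the local factor for each $d=2m-1$, with the same parity split for $P_1$. The only cosmetic difference is in \eqref{eq:P2}, where you factor the double sum over $(a,b)$ directly as $\bigl(\sum_{a\ge 1}a\,t^a\bigr)^2$, whereas the paper first groups by the total multiplicity $t=a+b$ and evaluates the convolution $\sum_{i=1}^{t-1}i(t-i)=\tfrac{t^3-t}{6}$; the two computations are equivalent.
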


Before a proof of Theorem~\ref{thm:expression-P1-P2} is given, we state a further refinement of Theorem~\ref{thm:refine-pdo}, as both a partition theorem and an identity between coefficients of two generating functions.
\begin{theorem}\label{thm:xy-refine}
For all $n\ge 0$, and $k\ge j\ge 0$, there are as many PDO partitions of weight $2n$ with $k$ different part sizes and $j$ part sizes that occur an odd number of times, as there are PDO partition pairs of combined weight $n$, with $k$ designated parts, and $j/2$ simultaneously shared part sizes. Restated in terms of generating functions, we have
\begin{align}\label{id:P1=P2}
[q^{2n}]P_1(x,y,q) = [q^n]P_2(x,y,q).
\end{align}
\end{theorem}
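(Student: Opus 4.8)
The plan is to reduce Theorem~\ref{thm:xy-refine} to the already-reformulated operator identity \eqref{id:G2Gy} by showing that both $P_1(x,y,q)$ and $P_2(x,y,q)$ are specializations of the symmetric two-variable object governed by Theorem~\ref{thm:GxGy}. The combinatorial statement and \eqref{id:P1=P2} are equivalent by definition of $P_1$ and $P_2$, so I would work entirely on the generating-function side. The key observation is that Theorem~\ref{thm:expression-P1-P2} supplies closed product forms for both sides, so the whole theorem becomes the assertion that the even-indexed coefficients of the product in \eqref{eq:P1} match the coefficients of the product in \eqref{eq:P2}. My strategy is therefore: first establish Theorem~\ref{thm:expression-P1-P2} combinatorially, and then recognize each product as an instance of $G$ so that \eqref{id:P1=P2} follows from the symmetric generalization \eqref{id:GxGy} under a suitable choice of parameters $(x,y,u,v)$ satisfying \eqref{correlation}.

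For the product formulas I would argue factor-by-factor over part sizes. Since a PDO partition is built independently on each odd part $2m-1$, the generating function factors as an infinite product indexed by $m$, and I would compute the local factor attached to a single odd part $2m-1$. For $P_1$, the local factor records whether the part is absent, present an even number of times, or present an odd number of times, weighting each occurrence-pattern by the designation count (a single $x$ if the part appears at all), the $y$-contribution (one $y$ exactly when the multiplicity is odd), and the number of ways to designate; summing the geometric-type series over all positive multiplicities and splitting by parity of the multiplicity yields the three-term factor in \eqref{eq:P1}. For $P_2$ the local factor ranges over pairs $(\mu|\nu)$ restricted to the single part $2m-1$: either the part is in neither (contributing $1$), in exactly one (contributing $x$ times a single-part designated-partition generating function $\tfrac{q^{2m-1}}{(1-q^{2m-1})^2}$, with a factor of $2$ for the choice of which of $\mu,\nu$ carries it), or in both (contributing $x^2y^2$ times the square of that single-part series, the $y^2$ coming from $2\lr=2$). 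Assembling these gives \eqref{eq:P2}.

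The heart of the matter is then the passage from the product identity to \eqref{id:P1=P2} via $G$. I would show that after the substitution $q\to q^2$ that is implicit in comparing $[q^{2n}]$ with $[q^n]$, the product \eqref{eq:P1} equals $\tfrac{f_2}{f_1^2}G(x,q)$ (equivalently $\tfrac{f_4}{f_2^2}$ times a $G$ in the doubled variable), while \eqref{eq:P2} equals $\left(\tfrac{f_2}{f_1^2}G(x,q)\right)^2$ up to the same eta-quotient normalization; this is exactly the content of \eqref{id:G=Ck} together with the reformulation leading to \eqref{id:G2Gy}. Once both products are identified with $G$-expressions, \eqref{id:P1=P2} is precisely $\cH(G(2,q)G(x,q))=G(x,q)^2$, i.e.\ the case $y=2$, $u=v=x$ of Theorem~\ref{thm:GxGy} (one checks $u+v=2x=xy$ and $uv=x^2=x^2+y^2-4$ hold when $y=2$). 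Thus \eqref{id:P1=P2} follows from \eqref{id:GxGy}, which is proved in the next section.

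I expect the main obstacle to be the bookkeeping in the local-factor computations for Theorem~\ref{thm:expression-P1-P2}, particularly matching the parity-sensitive $y$-weight in $P_1$ with the $y^2$-per-shared-part weight in $P_2$, and verifying that the two different normalizing eta-quotients arising from the $\tfrac{f_2}{f_1^2}$ prefactor on each side cancel correctly under the operator $\cH$; the algebraic identification of each product with a $G$-expression, while routine, is where sign and exponent errors are most likely to creep in, so I would anchor it to the established identities \eqref{id:G=Ck} and \eqref{id:G2Gy} rather than re-deriving it from scratch.
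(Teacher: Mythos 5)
Your derivation of the product formulas in Theorem~\ref{thm:expression-P1-P2} (factoring over part sizes, splitting by parity of the multiplicity for $P_1$, and by whether a part lies in neither, one, or both of $\mu,\nu$ for $P_2$) is sound and matches the paper. The gap is in the reduction step. You claim that \eqref{eq:P1} equals $\frac{f_2}{f_1^2}G(x,q)$ after $q\to q^2$ and that \eqref{eq:P2} equals its square, so that \eqref{id:P1=P2} is ``precisely $\cH(G(2,q)G(x,q))=G(x,q)^2$, i.e.\ the case $y=2$, $u=v=x$ of Theorem~\ref{thm:GxGy}.'' This cannot be right: $P_1(x,y,q)$ genuinely depends on the second parameter $y$ (the coefficients $2x$ and $xy$ in the $m$-th factor of \eqref{eq:P1} are independent), whereas $\frac{f_2}{f_1^2}G(x,q)=\sum_k C_k(q)x^{2k}$ carries only one parameter. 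You have conflated the variable $y$ of Theorem~\ref{thm:GxGy} with the statistic-marking variable $y$ in $P_1,P_2$; setting the $G$-parameter equal to $2$ collapses everything to the one-parameter identity \eqref{id:G2Gy}, which is Theorem~\ref{thm:refine-pdo} (equivalently Lemma~\ref{lem:main 2-dis}), not Theorem~\ref{thm:xy-refine}. Indeed, a single-$G$ identification of $P_1$ is only possible at $y=1$, where the $\ldo$ statistic is forgotten.

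The correct passage requires the full four-variable strength of Theorem~\ref{thm:GxGy}. Writing the $m$-th factor of \eqref{eq:P1} over a common denominator gives the palindromic quartic
$1+xyq^{2m-1}+(2x-2)q^{4m-2}+xyq^{6m-3}+q^{8m-4}$,
which factors as a product of two palindromic quadratics $\left(1+(w^2-2)q^{2m-1}+q^{4m-2}\right)\left(1+(z^2-2)q^{2m-1}+q^{4m-2}\right)$; this identifies $P_1(x,y,q)$ with $G(w,q)G(z,q)/\bigl((q^2;q^4)^2_\infty(q^2;q^2)^2_\infty\bigr)$, a product of two \emph{distinct} $G$'s, under the change of variables $xy=w^2+z^2-4$ and $2x=(w^2-2)(z^2-2)+4$. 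Similarly $P_2(x,y,q)=G(u,q)G(v,q)/\bigl((q;q^2)^2_\infty(q;q)^2_\infty\bigr)$ with $xy=uv$ and $2x=u^2+v^2$, and these substitutions are consistent precisely because of \eqref{correlation}. Only then does Lemma~\ref{lem:gen lemma} applied to \eqref{id:GxGy} yield \eqref{id:P1=P2}. As written, your argument proves only the $y=1$ (equivalently, the $x$-only) refinement and does not reach the two-parameter statement.
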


We next provide a concrete example for this result.

\begin{example}
For the case $n=4$ and $k=j=2$, we see there are 10 PDO partitions of weight $8$ which contain exactly two different part sizes, and each of them occur an odd number of times (these are derived from designating the parts in $7+1$, $5+3$, $5+1+1+1$, and $3+1+1+1+1+1$). On the other hand, the following 10 (the same count as claimed by Theorem \ref{thm:xy-refine}) PDO partition pairs are all of those having combined weight $4$, with $2$ designated parts and $j/2=1$ shared part size.
\begin{gather*}
(1'+1 | 1'+1), \ \ \  (1'+1 | 1+1'),\ \ \ (1+1' | 1'+1), \ \ \  (1+1' | 1+1'),\\
 \ \ \ (1' | 1'+1+1 ), \ \ \ (1' | 1+1'+1 ), \ \ \ (1' | 1+1+1' ), \\
 (1'+1+1 | 1'), \ \ \ (1+1'+1 | 1'), \ \ \ (1+1+1' | 1'). 
\end{gather*}
\end{example}

For the rest of this section, we first prove Theorem~\ref{thm:expression-P1-P2}, then we continue to explain the equivalence between Theorem~\ref{thm:GxGy} and Theorem \ref{thm:xy-refine}, relying on the two expressions shown in Theorem \ref{thm:expression-P1-P2}.

\begin{proof}[Proof of Theorem~\ref{thm:expression-P1-P2}]
We begin with the derivation of \eqref{eq:P1}.
\begin{align*}
P_1(x,y,q) &= \sum_{\lambda\in\PDOset}x^{\ell_d(\lambda)}y^{\ell_d^o(\lambda)}q^{\abs{\lambda}}\\
&= \prod_{m\ge 1}(1+xyq^{2m-1}+2xq^{2(2m-1)}+3xyq^{3(2m-1)}+4xq^{4(2m-1)}+\cdots)\\
&= \prod_{m\ge 1}(1+x(2q^{2(2m-1)}+4q^{4(2m-1)}+\cdots)+xy(q^{2m-1}+3q^{3(2m-1)}+\cdots))\\
&= \prod_{m\ge 1}\left(1+x\frac{2q^{2(2m-1)}}{(1-q^{2(2m-1)})^2}+xy\frac{q^{2m-1}(1+q^{2(2m-1)})}{(1-q^{2(2m-1)})^2}\right).
\end{align*}
In order to make a similar calculation that yields \eqref{eq:P2}, we take any PDO partition pair $(\mu | \nu)$, pick out one of its parts, say $2m-1$, and consider the following two cases.
\begin{description}
    \item[Case I] If $2m-1$ only occurs in one of $\mu$ or $\nu$, and the number of its copies is, say $t$, then collectively they contribute the weight $2txq^{t(2m-1)}$ to $P_2(x,y,q)$, where the factor $2$ indicates the two possibilities that $2m-1$ may occur in either $\mu$ or $\nu$, and $t$ accounts for the $t$ different ways to designate one part of size $2m-1$.
    \item[Case II] If $2m-1$ occurs in both of $\mu$ and $\nu$ with a total number of occurrences being, say $t$, then the weight contribution is $(\frac{t^3-t}{6})x^2y^2q^{t(2m-1)}$, where the factor $\frac{t^3-t}{6}$ can be justified as follows. Suppose $i$ out of $t$ copies of $2m-1$ come from $\mu$ with $1\le i\le t-1$, then there are $i$ ways to designate one part of size $2m-1$ in $\mu$ and $t-i$ ways to designate one part of size $2m-1$ in $\nu$. Therefore the total possibilities are given by
    $$\sum_{i=1}^{t-1}i(t-i)=t\binom{t}{2}-\frac{t(t-1)(2t-1)}{6}=\frac{t^3-t}{6}.$$ 
\end{description}
The discussion above gives rise to the following calculation.
\begin{align*}
P_2(x,y,q) &= \sum_{(\mu | \nu)\in\PDOset\times\PDOset}x^{\ld(\mu)+\ld(\nu)}y^{2\lr(\mu,\nu)}q^{|\mu|+|\nu|}\\
&= \prod_{m\ge 1}\left(1+x\sum_{t\ge 1}2tq^{t(2m-1)}+x^2y^2\sum_{t\ge 1}\frac{t^3-t}{6}q^{t(2m-1)}\right)\\
&= \prod_{m\ge 1}\left(1+2x\frac{q^{2m-1}}{(1-q^{2m-1})^2}+x^2y^2\frac{q^{2(2m-1)}}{(1-q^{2m-1})^4}\right).
\end{align*}
\end{proof}

Next, we show how to deduce \eqref{id:P1=P2} from \eqref{id:GxGy}. The idea is to fully exploit Andrews-Rose's identity \eqref{id:G=Ck}, Lemma~\ref{lem:gen lemma}, and make an appropriate change of variables. Let us first rewrite \eqref{id:G=Ck} in a more explicit way:
\begin{align}
G(x,q) &= \frac{(q;q)^2_{\infty}}{(q^2;q^2)_\infty}\sum_{k\ge 0}C_k(q)x^{2k}\nonumber \\
&= (q^2;q^2)_{\infty}(q;q^2)^2_{\infty}\prod_{m\ge 1}\left(1+x^2\frac{q^{2m-1}}{(1-q^{2m-1})^2}\right)\nonumber \\
&= (q^2;q^2)_{\infty}\prod_{m\ge 1}\left((1-q^{2m-1})^2+x^2q^{2m-1}\right).
\label{id:G=Ck new}
\end{align}
We pause here to make a key observation. After rewriting the $m$-th factor in \eqref{eq:P1} as a single fraction, we see the numerator is given by $$1+xyq^{2m-1}+(2x-2)(q^{2m-1})^2+xy(q^{2m-1})^3+(q^{2m-1})^4.$$ 
Viewed as a polynomial in $q^{2m-1}$, its palindromic coefficients make it plausible to attempt the factorization
$$(1+Aq^{2m-1}+(q^{2m-1})^2)(1+Bq^{2m-1}+(q^{2m-1})^2)$$
and apply \eqref{id:G=Ck new} twice to assemble $G(x,y)G(y,q)$. Then, determining the coefficients $A$ and $B$ will guide us to the correct change of varibles.

More precisely, given four variables $w,z,u,v$ satisfying the relation \eqref{correlation}, i.e.,
\begin{align}
\label{cor1}
\begin{cases}
u+v=wz,\\
uv=w^2+z^2-4,
\end{cases}
\end{align}
the following identity can be deduced from applying Lemma~\ref{lem:gen lemma} to \eqref{id:GxGy}.
\begin{align}\label{id:GxGy new}
\cH\left(\frac{1}{(q^2;q^4)^2_{\infty}(q^2;q^2)^2_{\infty}}G(w,q)G(z,q)\right) &= \frac{1}{(q;q^2)^2_{\infty}(q;q)^2_{\infty}}G(u,q)G(v,q).
\end{align}
Plugging in the new expression \eqref{id:G=Ck new} for $G(w,q)$ and $G(z,q)$, we see that
\begin{align*}
\frac{G(w,q)G(z,q)}{(q^2;q^4)^2_{\infty}(q^2;q^2)^2_{\infty}} &= \frac{\prod_{m\ge 1}((1-q^{2m-1})^2+w^2q^{2m-1})((1-q^{2m-1})^2+z^2q^{2m-1})}{(q^2;q^4)^2_{\infty}}\\
&= \frac{\prod_{m\ge 1}\left(1+(w^2-2)q^{2m-1}+q^{4m-2}\right)\left(1+(z^2-2)q^{2m-1}+q^{4m-2}\right)}{(q^2;q^4)^2_{\infty}}\\
&= \frac{\prod_{m\ge 1}\left(1+xyq^{2m-1}+(2x-2)q^{4m-2}+xyq^{6m-3}+q^{8m-4}\right)}{(q^2;q^4)^2_{\infty}}\\
&=\prod_{m\ge 1}\left(1+2x\frac{q^{4m-2}}{(1-q^{4m-2})^2}+xy\frac{q^{2m-1}+q^{6m-3}}{(1-q^{4m-2})^2}\right)= P_1(x,y,q),
\end{align*}
where in the penultimate line we have renamed the variables
\begin{align}\label{cor2}
w^2+z^2-4=uv\to xy, \quad (w^2-2)(z^2-2)+4=u^2+v^2\to 2x.
\end{align}
The right hand side of \eqref{id:GxGy new} can be handled analogously. Plugging in \eqref{id:G=Ck new} for $G(u,q)$ and $G(v,q)$, we obtain 
\begin{align*}
\frac{G(u,q)G(v,q)}{(q;q^2)^2_{\infty}(q;q)^2_{\infty}} &= \frac{\prod_{m\ge 1}((1-q^{2m-1})^2+u^2q^{2m-1})((1-q^{2m-1})^2+v^2q^{2m-1})}{(q;q^2)^4_{\infty}}\\
&= \prod_{m\ge 1}\left(1+(u^2+v^2)\frac{q^{2m-1}}{(1-q^{2m-1})^2}+u^2v^2\frac{q^{4m-2}}{(1-q^{2m-1})^4}\right)\\
&= \prod_{m\ge 1}\left(1+2x\frac{q^{2m-1}}{(1-q^{2m-1})^2}+x^2y^2\frac{q^{4m-2}}{(1-q^{2m-1})^4}\right)=P_2(x,y,q).
\end{align*}
To verify the change of variables in the last line, it suffices to combine the relations \eqref{cor1} and \eqref{cor2}. The calculations above indicate that indeed \eqref{id:GxGy new} implies \eqref{id:P1=P2}, and since every step is reversible, we have actually proven that Theorem~\ref{thm:GxGy} is equivalent to Theorem~\ref{thm:xy-refine}.

\section{A proof of Theorem~\ref{thm:GxGy}}
All that remains is to prove Theorem~\ref{thm:GxGy}. After applying the $\cH$ operator on the product $G(x,q)G(y,q)$ and making a $2$-dissection based on Theorem~\ref{id:Gxq-2-dis}, we see that Theorem~\ref{thm:GxGy} is equivalent to the following result.
\begin{lemma}\label{lem:Cheby-xyuv}
Given four variables $x,y,u,v$ related by \eqref{correlation}, the following identities hold.
\begin{align}
&\left(1+2\sum_{n\ge 1}T_{4n}(\frac{x}{2})q^{2n^2}\right)\left(1+2\sum_{n\ge 1}T_{4n}(\frac{y}{2})q^{2n^2}\right)= \left(1+2\sum_{n\ge 1}T_{4n}(\frac{u}{2})q^{4n^2}\right)\left(1+2\sum_{n\ge 1}T_{4n}(\frac{v}{2})q^{4n^2}\right)\nonumber \\
 &\qquad\qquad\qquad +4q^2\left(\sum_{n\ge 1}T_{4n-2}(\frac{u}{2})q^{4n^2-4n}\right)\left(\sum_{n\ge 1}T_{4n-2}(\frac{v}{2})q^{4n^2-4n}\right),\label{id:Cheby-even}\\
&2\left(\sum_{n\ge 1}T_{4n-2}(\frac{x}{2})q^{2n^2-2n}\right)\left(\sum_{n\ge 1}T_{4n-2}(\frac{y}{2})q^{2n^2-2n}\right) = \left(1+2\sum_{n\ge 1}T_{4n}(\frac{u}{2})q^{4n^2}\right)\left(\sum_{n\ge 1}T_{4n-2}(\frac{v}{2})q^{4n^2-4n}\right)\nonumber \\
&\qquad\qquad\qquad +\left(1+2\sum_{n\ge 1}T_{4n}(\frac{v}{2})q^{4n^2}\right)\left(\sum_{n\ge 1}T_{4n-2}(\frac{u}{2})q^{4n^2-4n}\right).
\label{id:Cheby-odd}
\end{align}
In particular, setting $y=2$ and hence $u=v=x$ by \eqref{correlation}, we recover the two identities in Lemma~\ref{lem:main 2-dis}.
\end{lemma}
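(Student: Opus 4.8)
The plan is to establish the single ``master'' identity behind Theorem~\ref{thm:GxGy}, namely $\cH(G(x,q)G(y,q))=G(u,q)G(v,q)$, by a direct argument, and then to read off \eqref{id:Cheby-even} and \eqref{id:Cheby-odd} as its even-power and odd-power parts in $q$. The key device is to linearize the Chebyshev polynomials. Writing $x=t+t^{-1}$ and $y=s+s^{-1}$ for formal variables $t,s$, the relation $2T_m(x/2)=t^m+t^{-m}$ turns $G$ into a Jacobi-type theta series:
$$G(x,q)=1+\sum_{m\ge1}(t^{2m}+t^{-2m})q^{m^2}=\sum_{m\in\mathbb{Z}}q^{m^2}t^{2m},\qquad G(y,q)=\sum_{n\in\mathbb{Z}}q^{n^2}s^{2n}.$$

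First I would multiply these two series and apply $\cH$. Since $\cH$ retains exactly the terms $q^{m^2+n^2}t^{2m}s^{2n}$ with $m^2+n^2$ even --- equivalently $m\equiv n\pmod 2$ --- and then replaces $q^2$ by $q$, the substitution $m=j+k,\ n=j-k$ (a bijection of $\{(m,n):m\equiv n\}$ onto $\mathbb{Z}^2$) together with $m^2+n^2=2(j^2+k^2)$ factors the double sum:
$$\cH(G(x,q)G(y,q))=\sum_{j,k\in\mathbb{Z}}q^{j^2+k^2}(ts)^{2j}(ts^{-1})^{2k}=G(u,q)\,G(v,q),$$
where $u=ts+(ts)^{-1}$ and $v=ts^{-1}+(ts^{-1})^{-1}$. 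A short computation then confirms $u+v=(t+t^{-1})(s+s^{-1})=xy$ and $uv=(t^2+t^{-2})+(s^2+s^{-2})=x^2+y^2-4$, so that $(x,y,u,v)$ obey precisely \eqref{correlation}; conversely every such quadruple arises this way, and the two admissible branches for $t,s$ merely interchange $u$ and $v$, under which \eqref{id:GxGy} is symmetric. Because $t,s$ are genuine formal variables, this is an identity of formal Laurent series over $\mathbb{Z}[[q]]$ and requires no analytic continuation or density argument.

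It then remains to split this master identity by the parity of the exponent of $q$. Using the $2$-dissection $G(z,q)=E_z+O_z$ of Theorem~\ref{id:Gxq-2-dis}, where $E_z=1+2\sum_{n\ge1}T_{4n}(z/2)q^{4n^2}$ collects the even powers and $O_z=2q\sum_{n\ge1}T_{4n-2}(z/2)q^{4n^2-4n}$ the odd ones, one finds that $G(u,q)G(v,q)$ has even part $E_uE_v+O_uO_v$ and odd part $E_uO_v+O_uE_v$. On the left, $G(x,q)G(y,q)=E_xE_y+O_xO_y+(E_xO_y+O_xE_y)$; here the mixed terms sit in odd $q$-degree and are killed by $\cH$, $\cH(E_xE_y)$ carries only even powers, and $\cH(O_xO_y)$ carries only odd powers, the last because $(2i-1)^2+(2j-1)^2\equiv2\pmod 4$, so halving the exponent yields an odd number. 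Matching even parts gives $\cH(E_xE_y)=E_uE_v+O_uO_v$, which is exactly \eqref{id:Cheby-even}; matching odd parts gives $\cH(O_xO_y)=E_uO_v+O_uE_v$, which after cancelling a common factor $2q$ is exactly \eqref{id:Cheby-odd}. Finally, the specialization $y=2$ forces $s=1$, hence $u=v=x$ and $T_{4n}(1)=T_{4n-2}(1)=1$, recovering Lemma~\ref{lem:main 2-dis} termwise.

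The main obstacle I anticipate is not the central bijection --- which is transparent once $G$ is recognized as a theta function --- but the careful bookkeeping in this dissection step: one must verify that the even/odd separation on each side is clean (no cross terms leak through $\cH$), track the stray factor of $q$ and the constants $2$ and $4$ correctly so that the two halves land precisely on \eqref{id:Cheby-even} and \eqref{id:Cheby-odd} rather than on scalar multiples, and confirm the $u\leftrightarrow v$ symmetry, ensuring that only symmetric functions of $u,v$ --- hence bona fide expressions in $x,y$ --- appear.
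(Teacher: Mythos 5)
Your proof is correct, and it takes a genuinely different (and arguably slicker) route than the paper's. The paper proves Lemma~\ref{lem:Cheby-xyuv} by substituting $x=2\cos\alpha$, $y=2\cos\beta$, $u=2\cos(\alpha+\beta)$, $v=2\cos(\alpha-\beta)$, reducing each of \eqref{id:Cheby-even} and \eqref{id:Cheby-odd} separately to a trigonometric identity (Lemma~\ref{lem:new 2-dis}); these are then verified by expanding the unilateral sums, splitting into the diagonal $n=m$ and the two wedges $n>m$, $n<m$, and transporting the off-diagonal pieces through the explicit bijections $f_e:(n,m)\mapsto(n+m,n-m)$ and $f_o:(n,m)\mapsto(n+m-1,n-m)$ of Proposition~\ref{prop:two bij}, with repeated use of the product-to-sum formula \eqref{id:cos-sum-product}. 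Your substitution $x=t+t^{-1}$, $2T_m(x/2)=t^m+t^{-m}$ is the formal-variable avatar of the same specialization (take $t=e^{i\alpha}$, $s=e^{i\beta}$), and your change of variables $(m,n)\mapsto(j+k,j-k)$ on $\set{m\equiv n\pmod 2}$ is exactly the paper's $f_e$/$f_o$ in disguise; the real gain is that by working with the bilateral theta series $G(x,q)=\sum_{m\in\mathbb{Z}}q^{m^2}t^{2m}$ you prove the master identity $\cH(G(x,q)G(y,q))=G(u,q)G(v,q)$ in one stroke, with no trichotomy of index sets and no separate handling of the two parities until the final (routine) dissection into even and odd parts --- which you carry out with the correct constants and the correct cancellation of $2q$ in \eqref{id:Cheby-odd}. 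The one point to state carefully, which you do address, is why the parametrized identity implies the lemma for arbitrary $x,y,u,v$ satisfying \eqref{correlation}: both sides of \eqref{id:Cheby-even} and \eqref{id:Cheby-odd} are, coefficientwise in $q$, symmetric polynomials in $u,v$, hence polynomials in $xy$ and $x^2+y^2-4$, and two polynomials in $x,y$ agreeing under the substitution $x=t+t^{-1}$, $y=s+s^{-1}$ (with $t,s$ independent formal variables) must coincide; this plays the role of the paper's ``enough evaluation points'' argument. What the paper's route buys is that it never leaves the realm of real trigonometric identities and displays each constituent piece ($A_0$, $A_1$, $B_0$, $B_1$, $B_2$, etc.) explicitly; what yours buys is brevity and the conceptual point that the whole lemma is the splitting of $\mathbb{Z}^2$ along the sublattice $m\equiv n\pmod 2$.
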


Both identities are concerned with Chebyshev polynomials in four correlated variables with bounded degrees, so similar to the proof of \eqref{id:cheby-sum-prod}, it suffices to evaluate them at specific values and verify the resulting identities by repeatedly applying the sum-product identity \eqref{id:cos-sum-product}. Let us set $x=2\cos\alpha$, $y=2\cos\beta$, $u=2\cos\mu$, and $v=2\cos\nu$.  Consequently, \eqref{correlation} now reads as
\begin{align*}
\begin{cases}
\cos\mu+\cos\nu=2\cos\alpha\cos\beta,\\
2\cos\mu\cos\nu=2\cos^2\alpha+2\cos^2\beta-2=\cos2\alpha+\cos2\beta,
\end{cases}
\end{align*}
which, in view of \eqref{id:cos-sum-product}, is equivalent to saying that $\cos\mu=\cos(\alpha+\beta)$ and $\cos\nu=\cos(\alpha-\beta)$. Now since all four variables $\alpha,\beta,\mu,\nu$ only appear as arguments in the cosine function, we shall simply enforce that $\mu=\alpha+\beta$ and $\nu=\alpha-\beta$. With the discussion above in mind, one sees that Lemma~\ref{lem:Cheby-xyuv} is equivalent to the following result concerning cosine functions. 


\begin{lemma}\label{lem:new 2-dis}
For a pair of real numbers $\alpha$ and $\beta$, let $\mu:=\alpha+\beta$ and $\nu:=\alpha-\beta$. The following identities hold:
\begin{align}
&\quad \left(1+2\sum_{n>0}\cos(4n\alpha)q^{2n^2}\right)\left(1+2\sum_{n>0}\cos(4n\beta)q^{2n^2}\right)\nonumber\\
&=\left(1+2\sum_{n>0}\cos(4n\mu)q^{4n^2}\right)\left(1+2\sum_{n>0}\cos(4n\nu)q^{4n^2}\right)\nonumber\\
&\quad +4\left(\sum_{n>0}\cos((4n-2)\mu)q^{(2n-1)^2}\right)\left(\sum_{n>0}\cos((4n-2)\nu)q^{(2n-1)^2}\right)\label{id:new 2-dis-e},  \textrm{\ \ and} \\
&\quad 2\left(\sum_{n>0}\cos((4n-2)\alpha)q^{2n^2-2n}\right)\left(\sum_{n>0}\cos((4n-2)\beta)q^{2n^2-2n}\right)\nonumber\\
&=\left(1+2\sum_{n>0}\cos(4n\mu)q^{4n^2}\right)\left(\sum_{n>0}\cos((4n-2)\nu)q^{4n^2-4n}\right) \nonumber\\
&\quad +\left(1+2\sum_{n>0}\cos(4n\nu)q^{4n^2}\right)\left(\sum_{n>0}\cos((4n-2)\mu)q^{4n^2-4n}\right).
\label{id:new 2-dis-o}
\end{align}
\end{lemma}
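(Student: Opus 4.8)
The plan is to prove Lemma~\ref{lem:new 2-dis} by recasting every one-variable cosine series as a bilateral sum over a one-dimensional lattice and then matching the two sides coefficientwise. Set $a=e^{2i\alpha}$ and $b=e^{2i\beta}$, so that the hypotheses $\mu=\alpha+\beta$, $\nu=\alpha-\beta$ become $e^{2i\mu}=ab$ and $e^{2i\nu}=a/b$. Using the re-summations $1+2\sum_{n>0}\cos(4n\theta)Q^{n^2}=\sum_{n\in\mathbb{Z}}Q^{n^2}e^{4in\theta}$ for the $\varphi$-type factors and $2\sum_{n>0}\cos((4n-2)\theta)Q^{(2n-1)^2/4}=\sum_{k\text{ odd}}Q^{k^2/4}e^{2ik\theta}$ for the $\psi$-type factors, each product on either side of \eqref{id:new 2-dis-e} and \eqref{id:new 2-dis-o} turns into a double bilateral sum of the shape $\sum q^{\,\square}a^{\,\bullet}b^{\,\bullet}$. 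Both sides are then genuine formal series in $q$ with Laurent-polynomial coefficients in $a,b$, so it suffices, for each identity, to exhibit a weight-preserving bijection between the index sets occurring on the two sides.

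Carrying this out for \eqref{id:new 2-dis-e}, the left-hand side collapses to $\sum_{(m,n)\in\mathbb{Z}^2}q^{2(m^2+n^2)}a^{2m}b^{2n}$. The engine of the proof is the lattice rotation $(m,n)=(r+s,\,r-s)$, under which $m^2+n^2=2(r^2+s^2)$; this single quadratic identity is exactly what converts the left-hand exponent $q^{2(\,\cdot\,)}$ into the exponent $q^{4(\,\cdot\,)}$ governing the $\theta_3$-type product on the right, and it is the arithmetic heart of the $2$-dissection. The map $(r,s)\mapsto(r+s,r-s)$ is a bijection from $\mathbb{Z}^2$ onto the sublattice $\{(m,n):m\equiv n\pmod 2\}$, while the analogous substitution $(k,l)\mapsto((k{+}l)/2,(k{-}l)/2)$ applied to the $\psi$-type term on the right indexes it precisely by $\{(m,n):m\not\equiv n\pmod 2\}$. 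Since these two residue classes tile $\mathbb{Z}^2$, the two right-hand terms reassemble the full left-hand sum, proving \eqref{id:new 2-dis-e}.

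For \eqref{id:new 2-dis-o} the bookkeeping is slightly more delicate because every series is of $\psi$-type, so one first absorbs the shifts from completing the square (e.g. $2n^2-2n=\tfrac12((2n-1)^2-1)$, which produces a common factor $q^{-1}$ that cancels). After this reduction the left-hand side reads $\sum_{j,l\text{ odd}}q^{(j^2+l^2)/2}a^{j}b^{l}$, and the substitutions $j=2r+p$, $l=\pm(2r-p)$ with $p$ odd send the two right-hand terms onto the families of odd pairs $(j,l)$ satisfying $j+l\equiv0\pmod4$ and $j-l\equiv0\pmod4$, respectively. The crux is the parity check: writing $j=2c+1$, $l=2d+1$, one verifies that $j+l\equiv0\pmod4$ is equivalent to $c+d$ being odd and $j-l\equiv0\pmod4$ to $c+d$ being even, so the two conditions are complementary and together exhaust every odd pair $(j,l)$, again reassembling the left-hand sum.

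I expect the main obstacle to be combinatorial rather than conceptual: one must confirm that each change of variables is a genuine bijection onto the asserted residue class (the integrality constraints $r=(m+n)/2$, $p=(j-l)/2$, and so on, must match the parities exactly) and that the two right-hand families partition the left-hand index set with no overlap and no omission. I would isolate these two bijections as the only nontrivial content and treat the exponent shifts (the $q^{-1/2}$ and $q^{-1}$ normalizations) as routine. I note in passing that, after the substitution above, \eqref{id:new 2-dis-e} and \eqref{id:new 2-dis-o} are instances of the classical theta addition formulas expressing $\theta_3(x)\theta_3(y)$ and $\theta_2(x)\theta_2(y)$ at nome $Q$ in terms of products $\theta_3(x{\pm}y),\theta_2(x{\pm}y)$ at nome $Q^2$, so one could alternatively quote those; but the self-contained lattice argument keeps the proof elementary and in the spirit of the paper.
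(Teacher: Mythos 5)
Your proof is correct, and it takes a genuinely different route from the paper's. The paper stays entirely with real cosine series: it expands each product, applies the product-to-sum formula $\cos(A+B)+\cos(A-B)=2\cos A\cos B$ repeatedly, splits the double sums into a diagonal piece and two off-diagonal pieces indexed by $S=\{(n,m):n>m>0\}$, and then invokes the explicit bijections $(n,m)\mapsto(n+m,n-m)$ onto $S_e$ and $(n,m)\mapsto(n+m-1,n-m)$ onto $S_o$ to regroup the terms. Your version symmetrizes first, rewriting each factor as a bilateral sum $\sum_{n\in\mathbb{Z}}Q^{n^2}e^{4in\theta}$ (resp. $\sum_{k\ \mathrm{odd}}Q^{k^2/4}e^{2ik\theta}$), so that each side becomes a single sum over a two-dimensional lattice; the identity then reduces to the decomposition of $\mathbb{Z}^2$ (resp. of the odd--odd pairs) into two complementary congruence classes under the rotation $(r,s)\mapsto(r+s,r-s)$ and its odd analogue. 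The arithmetic core is identical --- the quadratic identity $(r+s)^2+(r-s)^2=2(r^2+s^2)$ is exactly what powers the paper's bijections $f_e$ and $f_o$, which are in effect your lattice maps restricted to a fundamental domain --- but your bilateral formulation absorbs the paper's case analysis ($n=m$, $n>m$, $n<m$) and the repeated product-to-sum manipulations into a single coefficient match, at the cost of passing through complex exponentials (harmless, since both sides are formal $q$-series with Laurent-polynomial coefficients in $a,b$, and the formal identity specializes to $a=e^{2i\alpha}$, $b=e^{2i\beta}$). Two small points to tidy in a write-up: the common prefactor you cancel in \eqref{id:new 2-dis-o} is $\tfrac{1}{2}q^{-1}$, not just $q^{-1}$ (it is the same on both sides, so nothing breaks); and it is worth stating explicitly, as you do implicitly, that no odd pair $(j,l)$ satisfies both $j+l\equiv 0$ and $j-l\equiv 0 \pmod 4$, since their sum is $2j\equiv 2\pmod 4$. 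Your closing observation that both identities are specializations of the classical theta addition formulas is accurate and would be a legitimate alternative proof by citation.
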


Before proving Lemma \ref{lem:new 2-dis}, we note the following.
Let $\bP$ be the set of positive integers. We introduce the sets $S:=\set{(n,m)\in\bP\times\bP: n>m}$, and 
$$S_e:=\set{(n,m)\in S: n\equiv m\pmod 2},\quad S_o:=\set{(n,m)\in S: n\not\equiv m\pmod 2}.$$
Note that $S=S_e\bigcup S_o$ and we observe the following simple fact.
\begin{proposition}\label{prop:two bij}
The following two mappings are bijections:
\begin{align*}
f_e:\quad S &\to S_e \\
(n,m) &\mapsto \left(n+m,n-m\right),\\
\noalign{\medskip}
f_o:\quad S &\to S_o \\
(n,m) &\mapsto \left(n+m-1,n-m\right).
\end{align*}
\end{proposition}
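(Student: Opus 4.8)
The plan is to prove the proposition by exhibiting explicit two-sided inverses for each of $f_e$ and $f_o$, at which point the statement reduces to routine bookkeeping with parities and inequalities. Both maps are restrictions of the invertible linear change of variables $(n,m)\mapsto(n+m,n-m)$ (shifted by a constant in the first coordinate in the case of $f_o$), so the only genuine content is checking that the prescribed domains and codomains match up.

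First I would confirm that each map lands where it is claimed to. For $f_e$, given $(n,m)\in S$ the image $(n+m,n-m)$ satisfies $n+m>n-m\geq 1$, and $(n+m)-(n-m)=2m$ is even, so $n+m\equiv n-m\pmod 2$ and indeed $f_e(n,m)\in S_e$. For $f_o$, the image $(n+m-1,n-m)$ satisfies $n+m-1>n-m\geq 1$ (here $m\geq 1$ is exactly what guarantees the strict inequality $2m-1>0$), and $(n+m-1)-(n-m)=2m-1$ is odd, so $f_o(n,m)\in S_o$.

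Next I would write down the candidate inverses by solving the defining linear systems: for $f_e$ set $g_e(a,b)=\bigl(\tfrac{a+b}{2},\tfrac{a-b}{2}\bigr)$, and for $f_o$ set $g_o(a,b)=\bigl(\tfrac{a+b+1}{2},\tfrac{a-b+1}{2}\bigr)$. The decisive point is integrality, and this is precisely where the defining congruences of $S_e$ and $S_o$ are used: when $(a,b)\in S_e$ both $a+b$ and $a-b$ are even, so $g_e(a,b)\in\bP\times\bP$; when $(a,b)\in S_o$ the sum $a+b$ is odd, hence $a+b+1$ and $a-b+1$ are even and $g_o(a,b)\in\bP\times\bP$. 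In both cases the hypothesis $a>b\geq 1$ forces the second coordinate to be a positive integer and the strict inequality $n>m$ to hold (for $g_e$ one uses that $a>b$ with $a\equiv b$ gives $a-b\geq 2$, and for $g_o$ that $n-m=b\geq 1$), so $g_e$ maps $S_e$ into $S$ and $g_o$ maps $S_o$ into $S$.

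Finally I would verify the four composition identities $f_e\circ g_e=\mathrm{id}_{S_e}$, $g_e\circ f_e=\mathrm{id}_{S}$, $f_o\circ g_o=\mathrm{id}_{S_o}$, and $g_o\circ f_o=\mathrm{id}_{S}$, each of which is a one-line substitution. I expect the only mild obstacle to be the parity bookkeeping: one must invoke the congruence defining $S_e$ (respectively $S_o$) at exactly the step where the inverse divides by $2$, and conversely observe that every image of $f_e$ satisfies $n\equiv m$ while every image of $f_o$ satisfies $n\not\equiv m$, so that $S$ is partitioned cleanly and no pair of $S$ is missed or double-counted. Everything else is a routine linear check.
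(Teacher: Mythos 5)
Your proof is correct and complete: the well-definedness checks, the explicit inverses $g_e(a,b)=\bigl(\tfrac{a+b}{2},\tfrac{a-b}{2}\bigr)$ and $g_o(a,b)=\bigl(\tfrac{a+b+1}{2},\tfrac{a-b+1}{2}\bigr)$, and the parity/inequality bookkeeping are all exactly right. The paper states this proposition as a ``simple fact'' with no proof at all, so your argument is precisely the routine verification the authors leave to the reader.
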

We now proceed to proving the two identities that appear in Lemma \ref{lem:new 2-dis}.

\begin{proof}[Proof of \eqref{id:new 2-dis-e}]
We begin by expanding the left hand side of  \eqref{id:new 2-dis-e} as follows.

\begin{align*}
&\quad \left(1+2\sum_{n>0}\cos(4n\alpha)q^{2n^2}\right)\left(1+2\sum_{n>0}\cos(4n\beta)q^{2n^2}\right)\nonumber\\ 
&= 1+2\sum_{n>0}(\cos(4n\alpha)+\cos(4n\beta))q^{2n^2}+4\left(\sum_{n>0}\cos(4n\alpha)q^{2n^2}\right)\left(\sum_{n>0}\cos(4n\beta)q^{2n^2}\right)\\
&\stackrel{\eqref{id:cos-sum-product}}{=} 1+\underbrace{4\sum_{n>0}\cos(2n\mu)\cos(2n\nu)q^{2n^2}}_{A}+\underbrace{4\left(\sum_{n>0}\cos(4n\alpha)q^{2n^2}\right)\left(\sum_{m>0}\cos(4m\beta)q^{2m^2}\right)}_{B}.
\end{align*}
Next we split $A$ according to the parity of the index $n$.
\begin{align*}
A &= \underbrace{4\sum_{n>0}\cos(4n\mu)\cos(4n\nu)q^{8n^2}}_{A_0}+\underbrace{4\sum_{n>0}\cos((4n-2)\mu)\cos((4n-2)\nu)q^{2(2n-1)^2}}_{A_1}.
\end{align*}

Likewise, we split $B$ based on whether $n=m$, $n>m$, or $n<m$.
\begin{align*}
B &= 4\sum_{n>0}\cos(4n\alpha)\cos(4n\beta)q^{4n^2}+4\sum_{(n,m)\in S}\cos(4n\alpha)\cos(4m\beta)q^{2n^2+2m^2}\\
&\quad +4\sum_{(n,m)\in S}\cos(4m\alpha)\cos(4n\beta)q^{2n^2+2m^2}\\
&=B_0+B_1+B_2.
\end{align*}
Equipped with Proposition~\ref{prop:two bij}, we can further transform $B_1$ and $B_2$ via the bijection $f_e$.
\begin{align*}
B_1 &\stackrel{\eqref{id:cos-sum-product}}{=} 2\sum_{(n,m)\in S}(\cos(4n\alpha+4m\beta)+\cos(4n\alpha-4m\beta))q^{2n^2+2m^2}\\
&= 2\sum_{(n,m)\in S}(\cos(2(n+m)\mu+2(n-m)\nu)+\cos(2(n-m)\mu+2(n+m)\nu))q^{(n+m)^2+(n-m)^2}\\
&= \underbrace{2\sum_{(n,m)\in S_e}\cos(2n\mu+2m\nu)q^{n^2+m^2}}_{B_{11}}+\underbrace{2\sum_{(n,m)\in S_e}\cos(2m\mu+2n\nu)q^{n^2+m^2}}_{B_{12}},
\end{align*}

\begin{align*}
B_2 &\stackrel{\eqref{id:cos-sum-product}}{=} 2\sum_{(n,m)\in S}(\cos(4m\alpha+4n\beta)+\cos(4m\alpha-4n\beta))q^{2n^2+2m^2}\\
&= 2\sum_{(n,m)\in S}(\cos(2(n+m)\mu-2(n-m)\nu)+\cos(2(n+m)\nu-2(n-m)\mu))q^{(n+m)^2+(n-m)^2}\\
&= \underbrace{2\sum_{(n,m)\in S_e}\cos(2n\mu-2m\nu)q^{n^2+m^2}}_{B_{21}}+\underbrace{2\sum_{(n,m)\in S_e}\cos(2n\nu-2m\mu)q^{n^2+m^2}}_{B_{22}}.
\end{align*}
We regroup these four pieces in order to apply \eqref{id:cos-sum-product} again.
\begin{align*}
B_1+B_2 &= (B_{11}+B_{21})+(B_{12}+B_{22})\\
&= 2\sum_{(n,m)\in S_e}(\cos(2n\mu+2m\nu)+\cos(2n\mu-2m\nu))q^{n^2+m^2}\\
&\qquad +2\sum_{(n,m)\in S_e}(\cos(2n\nu+2m\mu)+\cos(2n\nu-2m\mu))q^{n^2+m^2}\\
&\stackrel{\eqref{id:cos-sum-product}}{=}4\sum_{(n,m)\in S_e}\cos(2n\mu)\cos(2m\nu)q^{n^2+m^2}+4\sum_{(n,m)\in S_e}\cos(2n\nu)\cos(2m\mu)q^{n^2+m^2}\\
&=4\sum_{n\neq m,~n\equiv m\pmod 2}\cos(2n\mu)\cos(2m\nu)q^{n^2+m^2}.
\end{align*}
Meanwhile, the right hand side of \eqref{id:new 2-dis-e} expands as
\begin{align*}
&\qquad 1+2\sum_{n>0}(\cos(4n\mu)+\cos(4n\nu))q^{4n^2}+4\left(\sum_{n>0}\cos(4n\mu)q^{4n^2}\right)\left(\sum_{m>0}\cos(4m\nu)q^{4m^2}\right)\\
&\qquad\qquad +4\left(\sum_{n>0}\cos((4n-2)\mu)q^{(2n-1)^2}\right)\left(\sum_{m>0}\cos((4m-2)\nu)q^{(2m-1)^2}\right)\\
&\stackrel{\eqref{id:cos-sum-product}}{=}1+4\sum_{n>0}\cos(4n\alpha)\cos(4n\beta)q^{4n^2}\\
&\qquad+4\sum_{n>0}\cos(4n\mu)\cos(4n\nu)q^{8n^2}+4\sum_{n>0}\cos((4n-2)\mu)\cos((4n-2)\nu)q^{2(2n-1)^2}\\
&\qquad +4\sum_{n\neq m}\cos(4n\mu)\cos(4m\nu)q^{4n^2+4m^2}+4\sum_{n\neq m}\cos((4n-2)\mu)\cos((4m-2)\nu)q^{(2n-1)^2+(2m-1)^2}\\
&=1+B_0+A_0+A_1+B_1+B_2,
\end{align*}
which recovers precisely those pieces that constitute the left hand side of \eqref{id:new 2-dis-e} and we are done.
\end{proof}

\begin{proof}[Proof of \eqref{id:new 2-dis-o}]
The essence of the proof is quite similar to that of \eqref{id:new 2-dis-e}, only that we now require the bijection $f_o$ instead of $f_e$ at a certain point. We start with a trichotomy of the left hand side of \eqref{id:new 2-dis-o}:
\begin{align*}
&\quad 2\left(\sum_{n>0}\cos((4n-2)\alpha)q^{2n^2-2n}\right)\left(\sum_{m>0}\cos((4m-2)\beta)q^{2m^2-2m}\right)\\
&= 2\sum_{n>0}\cos((4n-2)\alpha)\cos((4n-2)\beta)q^{4n^2-4n}\\
&\quad +2\sum_{(n,m)\in S}\cos((4n-2)\alpha)\cos((4m-2)\beta)q^{2n(n-1)+2m(m-1)}\\
&\quad +2\sum_{(n,m)\in S}\cos((4m-2)\alpha)\cos((4n-2)\beta)q^{2n(n-1)+2m(m-1)}\\
&:=C+D_1+D_2.
\end{align*}
One applies \eqref{id:cos-sum-product} to deduce that
\begin{align}
C &= \sum_{n>0}(\cos((4n-2)\mu)+\cos((4n-2)\nu))q^{4n^2-4n},\label{id:C}\\
D_1 &= \sum_{(n,m)\in S}(\cos((4n-2)\alpha+(4m-2)\beta)+\cos((4n-2)\alpha-(4m-2)\beta))q^{2n(n-1)+2m(m-1)}\nonumber \\
&= \sum_{(n,m)\in S}\cos(2(n+m-1)\mu+2(n-m)\nu))q^{(n+m-1)^2+(n-m)^2-1} \nonumber \\
&\qquad +\sum_{(n,m)\in S}\cos(2(n+m-1)\nu+2(n-m)\mu)q^{(n+m-1)^2+(n-m)^2-1}\nonumber \\
&\stackrel{f_o}{=}\underbrace{\sum_{(n,m)\in S_o}\cos(2n\mu+2m\nu)q^{n^2+m^2-1}}_{D_{11}}+\underbrace{\sum_{(n,m)\in S_o}\cos(2n\nu+2m\mu)q^{n^2+m^2-1}}_{D_{12}},\nonumber
\end{align}
and, analogously,
\begin{align*}
D_2 &= \underbrace{\sum_{(n,m)\in S_o}\cos(2n\mu-2m\nu)q^{n^2+m^2-1}}_{D_{21}}+\underbrace{\sum_{(n,m)\in S_o}\cos(-2n\nu+2m\mu)q^{n^2+m^2-1}}_{D_{22}}.\nonumber 
\end{align*}
Rearranging the above four pieces, we see that
\begin{align}
D_1+D_2 &= (D_{11}+D_{21})+(D_{12}+D_{22})\nonumber \\
&\stackrel{\eqref{id:cos-sum-product}}{=}2\sum_{(n,m)\in S_o}\cos(2n\mu)\cos(2m\nu)q^{n^2+m^2-1}+2\sum_{(n,m)\in S_o}\cos(2m\mu)\cos(2n\nu)q^{n^2+m^2-1}\nonumber \\
&= 2\sum_{n\not\equiv m\pmod 2}\cos(2n\mu)\cos(2m\nu)q^{n^2+m^2-1}\nonumber \\
&= 2\left(\sum_{n>0}\cos(4n\mu)q^{4n^2}\right)\left(\sum_{m>0}\cos((4m-2)\nu)q^{4m^2-4m}\right)\nonumber \\
&\qquad +2\left(\sum_{m>0}\cos(4m\nu)q^{4m^2}\right)\left(\sum_{n>0}\cos((4n-2)\mu)q^{4n^2-4n}\right)\label{id:D1+D2}.
\end{align}
Adding \eqref{id:D1+D2} back to \eqref{id:C} yields exactly the right hand side of \eqref{id:new 2-dis-o}, as desired.
\end{proof}

\section{Closing Thoughts}

Despite the fact that Theorem~\ref{thm:xy-refine}  provides a two-parameter generalization of \eqref{genfn:ori-pdo}, its pure combinatorial proof is yet to be found. Nonetheless, there is one special case that we would like to bring to the attention of interested reader. This is the case when we set $y=0$ in \eqref{id:P1=P2}. Namely, the number of $PDO$ partitions of $2n$ with $k$ different parts sizes none of which occurs an odd number of times, equals the number of $PDO$ partition pairs $(\mu | \nu)$ of combined weight $n$ such that $\ell_d(\mu)+\ell_d(\nu)=k$ and $\mu$ and $\nu$ have no parts in common. The equinumerosity at this level is transparent enough to be explained bijectively. Given a partition $\lambda\in\PDO(2n)$ with $\ell_d(\lambda)=k$ and $\ell_d^{o}(\lambda)=0$, we initially set $\mu=\nu=\varnothing$. Now for all occurrences (say $2m$) of a certain part size $a$ in $\lambda$, suppose the $i$-th ($1\le i\le 2m$) occurrence has been designated. Then we distribute $m$ copies of $a$ to the partition pair $(\mu |\nu)$ according to the following two cases.
\begin{itemize}
    \item If $1\le i\le m$, then we append all $m$ copies of $a$ to $\mu$, and designate the $i$-th occurrence of $a$ in $\mu$.
    \item If $m+1\le i\le 2m$, then we append all $m$ copies of $a$ to $\nu$, and designate the $(i-m)$-th occurrence of $a$ in $\nu$.
\end{itemize}
We carry out this process for all $k$ part sizes that occur in $\lambda$, and it is easy to see that the final pair $(\mu |\nu)$ satisfies $|\mu|+|\nu|=|\lambda|$, $\ell_d(\mu)+\ell_d(\nu)=\ell_d(\lambda)$, and $\ell_r(\mu,\nu)=0$. Moreover, the correspondence $\lambda\to(\mu |\nu)$ above is clearly reversible and is indeed a bijection. 

We close by noting that, in MacMahon's original treatment~\cite{Mac}, the series $C_k(q)$ was introduced as a variant to
\begin{equation*}
A_k(q):=\sum_{0<m_1<\cdots<m_k}\frac{q^{m_1+\cdots+m_k}}{(1-q^{m_1})^2\cdots(1-q^{m_k})^2},
\end{equation*}
whose partition theoretical interpretation involves $\PD$ partitions. On the other hand, the Chebyshev connection was established in Andrews-Rose's work~\cite{AR13}:
\begin{align}
\label{id:F-A}
F(x,q):=2\sum_{n=0}^{\infty}T_{2n+1}(x/2)q^{n^2+n}=f_2^3\sum_{k\ge 0}A_k(q^2)x^{2k+1}.
\end{align}
With these facts in mind, one may wonder if a similar result could be developed for $F(x,q)$ and $\PD(n)$.

\section*{Acknowledgement}
We are grateful to the anonymous referees for making useful suggestions on revising this paper. The first author was supported by the National Natural Science Foundation of China grants 12171059 and 12371336 and the Mathematical Research Center of Chongqing University.

\end{document}